\pgfplotsset{compat=1.15}
\theoremstyle{plain}  
\newtheorem{theorem}{Theorem}[section] 
\newtheorem{lemma}[theorem]{Lemma} 
\newtheorem{proposition}[theorem]{Proposition} 
\newtheorem{corollary}[theorem]{Corollary}
\theoremstyle{definition} 
\newtheorem{definition}[theorem]{Definition}
\newtheorem{example}[theorem]{Example}
\theoremstyle{remark}
\newcommand{\diff}{\,\mathrm{d}}
\newcommand{\E}{\mathbb{E}}
\newcommand{\var}{\operatorname{var}}
\newcommand{\ES}{\operatorname{ES}}
\newcommand{\R}{\mathbb{R}}
\newcommand{\one}{\mathbbm{1}}
\newcommand\restr[2]{{
  \left.\kern-\nulldelimiterspace 
  #1 
  \vphantom{\big|} 
  \right|_{#2} 
  }}
\begin{document}

\title{Isotonic regression for functionals of elicitation complexity greater than one}
\author{Anja M\"uhlemann and Johanna F.~Ziegel}
\maketitle
\begin{abstract}
We study the non-parametric isotonic regression problem for bivariate elicitable functionals that are given as an elicitable univariate functional and its Bayes risk. Prominent examples for functionals of this type are (mean, variance) and (Value-at-Risk, Expected Shortfall), where the latter pair consists of important risk measures in finance. We present our results for totally ordered covariates but extenstions to partial orders are given in the appendix.
\end{abstract}

\section{Introduction}
In isotonic regression the aim is to fit an increasing function $g_1$ to observations $(z_1, y_1), \dots, (z_n,y_n)$ such that a chosen loss function is minimized by $g_1$. The solution $g_1$ is then called a solution to the isotonic regression problem. If $g_1$ is supposed to model a conditional mean, then the loss function should be consistent for the mean in the sense of \citet[Definition 1]{Gneiting2011} with a prominent example being the squared error loss. More generally, if $g_1$ is a model for a conditional functional $T$, then the loss function  $L:\R \times \R \to \R$  should be chosen consistent for this functional $T$, that is, $\E_P L(t,Y) \leq \E_P L(x,Y)$ for all relevant probability distributions $P$, all $t \in T(P)$ and all $x \in \R$. 
Loss $L$ is called strictly consistent if the above inequality is strict for all $x \not\in T(P)$. This notion of consistency is a property of the functional $T$ and the loss function $L$ and should not to be confused with consistency of an estimator. Strict consistency of $L$ ensures that a correctly specified model minimizes the expected loss at the population level. 

If a functional $T$, that is, a map on a certain class of probability distributions, has a strictly consistent loss function it is called elicitable. We say that the loss function elicits $T$.  Elicitability is important for forecast comparison \citep{Gneiting2011}, and yields natural estimation procedures. Unfortunately, some ubiquitous functionals are not elicitable with prominent examples given by the variance ($\var$) and expected shortfall ($\ES_\alpha$), the latter being an important risk measure in finance and insurance. However, although $\ES_\alpha$ is not elicitable, it is jointly elicitable together with the $\alpha$-quantile ($q_\alpha$); see \cite{Fissler2016} and Example \ref{ex:2.2}.
Similarly, while $\var$ itself is not elicitable, it is jointly with the mean $(\E)$.
This means that both $\ES_\alpha$ and $\var$ are $2$-elicitable, that is, they can both be obtained as a function of a 2-dimensional elicitable functional. In a nutshell, the \emph{elicitation complexity} of a functional is the minimal number $k$ of dimensions needed for the functional to be $k$-elicitable. Since both $\ES_\alpha$ and $\var$ are not elicitable themselves but $2$-elicitable their elicitation complexity equals 2 \citep[Corollary 1 and 3]{Frongillo2018}.

Isotonic regression for one-dimensional elicitable functionals is well-under\-stood \citep{Barlow1972}. An interesting aspect is its robustness with respect to the choice of the consistent loss function in the minimization problem. In other words, no matter which strictly consistent loss function we choose for the functional $T$, we will obtain the same isotonic solution \citep{Brummer2013,Jordan2019}. This is in stark contrast to estimation in parametric regression models. In finite samples or for misspecified models, the choice of the consistent loss function may lead to miscellaneous estimates \citep{Patton2019}.

In this article, we investigate non-parametric regression for bivariate functionals $\underline{T}$ under isotonicity constraints. In particular, we show that simultaneous optimality with respect to an entire class of losses can rarely be achieved, and discuss how to find optimal solutions for specific choices of loss functions. The functionals we consider are of the form 
\[
\underline{T} = (T,\underline{L}),
\]
where $T$ is a one-dimensional elicitable functional with strictly consistent loss function $L$, and 
\begin{equation}\label{eq:BayesRisk}
\underline{L}(P):=\inf_{x_1\in\R} L(x_1,P)
\end{equation} 
with $L(x_1,P)=\int_{-\infty}^\infty L(x_1,y) \diff P(y)$ is the \emph{Bayes risk}. The example $\underline{T} = (\E,\var)$ arises by choosing $L(x,y) = (x-y)^2$, and the example $\underline{T}=(q_\alpha,\ES_\alpha)$ is obtained by choosing $L(x_,y) = (1/\alpha)\one\{ y\leq x\}(x-y)- x$, which is the piecewise linear loss known from quantile regression up to a function that only depends on $y$. Generally, \cite{Frongillo2018} show that $\underline{T}$ is always $2$-elicitable. Moreover, they also introduce a large class $\underline{\mathcal{L}}$ of loss functions ${L}(x_1,x_2,y)$ eliciting $\underline{T}$.

We show how the isotonic regression problem can be solved for $\underline{T}$. It turns out that the proposed canonical solution is generally not optimal with respect to all loss functions in $\underline{\mathcal{L}}$, but there is a fairly simple approach to check whether a given fit is simultaneously optimal. Furthermore, we show how the fit can be improved for a specific chosen loss function.  In a simulation experiment, we investigate how often simultaneously optimal fits occur for the functionals $(q_\alpha, \ES)$ and $(\E, \var)$ and investigate the fits for a specific choice of loss function.

The article is organized as follows. Section \ref{sec:mixture} introduces necessary preliminaries on consistent loss functions including a mixture representation for loss functions in $\underline{\mathcal{L}}$. In Section \ref{sec:isotonic}, the isotonic regression problem for total orders is formulated and a natural solution through sequential optimization is proposed. Then, we study the simultaneous optimality of the solution of the sequential optimization approach. Section \ref{sec:simulation} contains the numerical examples. In the Appendix, we show how our results can be generalized to partial orders.

\section{Preliminaries}\label{sec:mixture}

Following \citet{Jordan2019}, a function $V: \R \times \R \to \R$ is called an \emph{identification function} if $V(\cdot,y)$ is increasing and left-continuous for all $y\in\R$.
Then, for any probability measure $P$ on $\R$ with finite support, we define the functional T induced by an identification function $V$ as 
\begin{align*}
	T(P)= [T^-(P),T^+(P)] \subseteq [-\infty, \infty],
\end{align*}
where the lower and upper bounds are given by
\begin{align*}
	T^-(P)=\sup\{ x_1 : V(x_1,P) <0\} 
	\quad
	\text{and}
	\quad
	T^+(P)=\inf\{x_1:V(x_1,P)>0\},
\end{align*}
using the notation $V(x_1,P)=\int_{-\infty}^\infty V(x_1,y) \diff P(y)$. A broad class of functionals can be defined via their identification function, quantiles and expectiles, including the median and the mean, just being some of the most prominent examples. For other popular examples, see \cite{Jordan2019}. The examples of quantiles and expectiles already illustrate that the functional $T$ can take singleton-values as well as interval-values. 

Theorem 1 in \citet{Frongillo2018} states that if $L$ is a strictly consistent loss function for $T$ and $\underline{L}$ is the Bayes risk defined at \eqref{eq:BayesRisk}, then the loss
\begin{align}\label{eq:loss}
	\tilde{L}(x_1, x_2, y) = L'(x_1,y) + H(x_2) + h(x_2)(L(x_1,y)-x_2)
\end{align}
elicits $\underline{T}=(T,\underline{L})$, where $h:\R \to \R$ is any positive strictly decreasing function, $H(r) = \int_{0}^r h(x) \, \diff x$, and $L'$ is any consistent loss function for $T$ (possibly different from $L$ or even equal to zero). If $h$ is merely decreasing, then $\tilde{L}$ is still a consistent loss function.

\cite{Ehm2016} showed that for expectiles and quantiles any consistent loss function $L'$ can be written as
\begin{equation}\label{eq:mix}
	L'(x_1,y)=\int_\R S_{\eta,1}(x_1,y) \diff H_1(\eta),
\end{equation}
for certain elementary (quantile or expectile) losses $S_{\eta,1}$ and a measure $H_1$ on $\R$ depending on $L'$. In fact, such mixtures always yield a large class $\mathcal{L}$ of consistent scoring functions for $T$ if it is identifiable with identification function $V(x,y)$ \citep{Dawid2016,Ziegel2016a}. Then, the elementary losses are given by
\begin{align}\label{eq:1}
	S_{\eta,1}(x_1,y)=
	\left(\one\{\eta \leq x_1\}
	-\one\{\eta \leq y\}\right)V(\eta,y),
\end{align}
where $\eta \in \R$. Moreover, the elementary losses are themselves consistent for $T$. We define
\begin{align*}
	\mathcal{L} 	=\left\lbrace (x_1,y) \mapsto \int_\R S_{\eta,1}(x_1,y) \diff H_1(\eta):	H_1 \text{ is a positive measure on } \R \right\rbrace.
\end{align*}
Note that (strict) consistency of a loss function is not altered by adding functions in $y$ as long as they are integrable for all relevant probability measures $P$. Therefore, when speaking of characterizations of the class of (strictly) consistent loss functions this is always meant up to possible addition of a function in $y$. 

If a loss function is given as a mixture of elementary losses as in \eqref{eq:mix}, this may be useful when minimizing the expected loss (over some set of parameters, for example); see details for the isotonic regression problem in Section \ref{sec:isotonic}. Using Fubini's theorem, one can see that we can look for minimizers of the expected elementary losses and hope that these minimizers all agree, that is, there is a simultaneous minimizer for all parameters $\eta$. Then, this minimizer is automatically optimal for all scoring functions of the form \eqref{eq:mix}, independently of the measure $H_1$. Indeed, this approach is at the heart of the characterization of all simultaneously optimal solutions to the isotonic regression problem for one-dimensional functionals in \cite{Jordan2019}.

Using the same approach as used by \cite{Ziegel2020} to derive a mixture representation for the pair $(q_\alpha,\ES_\alpha)$, we derive a mixture representation for the loss functions for $\underline{T}$ of the form \eqref{eq:loss}.

\begin{lemma}\label{lem:mixture}
Let $L,L'\in{\mathcal{L}}$. Then, all consistent loss functions for $\underline{T} = (T,\underline{L})$ defined at \eqref{eq:loss} are of the form 
\begin{align} \label{eq:mixture}
	\tilde{L}(x_1,x_2,y)= \int S_{\eta,1}(x_1,y) \, \diff H_1(\eta)
	+ \int S_{\eta,2}(x_1,x_2,y) \, \diff H_2(\eta),
\end{align} 
where $H_1, H_2$ are measures on $\R$, $H_2$ is finite on intervals of the form $(-\infty,-x_2]$, $x_2\in \R$, and
\begin{align*}
	S_{\eta,1}(x_1, y) &= (\one\{\eta \leq x_1\} - \one\{\eta \leq y\}) 
	V(\eta,y)\\
	S_{\eta,2}(x_1,x_2,y) & = \one\{\eta \leq -x_2\} (L(x_1,y)+\eta) - \one\{\eta \le 0\}\eta.
\end{align*}
Conversely, any loss function of the form \eqref{eq:mixture} is consistent for $\underline{T} = (T,\underline{L})$. It is strictly consistent if $H_2$
puts positive mass on all open intervals.
\end{lemma}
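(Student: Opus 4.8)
The plan is to prove the two directions by reducing each to results already in hand: the mixture characterisation of the class $\mathcal{L}$ at \eqref{eq:mix}, and the consistency of losses of the form \eqref{eq:loss} established in \citet{Frongillo2018}. The first integral in \eqref{eq:mixture} is, by the definition of $\mathcal{L}$, an arbitrary consistent loss for $T$, so it will always play the role of $L'(x_1,y)$ in \eqref{eq:loss}; the entire difficulty concentrates on matching the $S_{\eta,2}$-integral with the Bayes-risk part of \eqref{eq:loss}. Concretely, writing $h(x_2):=H_2((-\infty,-x_2])$ and $H(r):=\int_0^r h(s)\diff s$, the key identity I aim to establish is
\begin{equation}\label{eq:keyid}
\int S_{\eta,2}(x_1,x_2,y)\diff H_2(\eta)=H(x_2)+h(x_2)\bigl(L(x_1,y)-x_2\bigr).
\end{equation}

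To prove \eqref{eq:keyid} I would split $S_{\eta,2}$ into its two indicator pieces. The coefficient of $L(x_1,y)$ is $\int\one\{\eta\le-x_2\}\diff H_2(\eta)=h(x_2)$, as wanted, and the remaining $x_1$-free part is $\int_{(-\infty,-x_2]}\eta\diff H_2(\eta)-\int_{(-\infty,0]}\eta\diff H_2(\eta)$. I would identify this with $H(x_2)-h(x_2)x_2$ by Fubini's theorem: inserting $h(s)=\int\one\{\eta\le-s\}\diff H_2(\eta)$ into $H(x_2)=\int_0^{x_2}h(s)\diff s$ and exchanging the order of integration produces exactly $x_2\,h(x_2)-\int_{(-x_2,0]}\eta\diff H_2(\eta)$, and rearranging yields \eqref{eq:keyid}. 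Note that the subtracted term $\one\{\eta\le0\}\eta$ in $S_{\eta,2}$ is precisely the normalising constant ensuring that both sides vanish at $x_2=0$, so no additional additive constant is needed. This argument uses only that $H_2$ is a positive measure finite on left-rays, with no smoothness of $h$.

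For the forward direction I would start from $\tilde L$ of the form \eqref{eq:loss} with $h$ positive and strictly decreasing, set $c:=\lim_{s\to\infty}h(s)\ge0$, and let $H_2$ be the Lebesgue--Stieltjes measure of the increasing function $u\mapsto h(-u)-c$; this is a positive measure, finite on left-rays and vanishing at $-\infty$. Applying \eqref{eq:keyid} to $h-c$ shows that the $S_{\eta,2}$-integral reproduces $H(x_2)+h(x_2)(L(x_1,y)-x_2)$ up to the leftover $-c\,L(x_1,y)$. Since $L,L'\in\mathcal{L}$ and $\mathcal{L}$ is closed under nonnegative combinations, $L'+cL\in\mathcal{L}$ has a representation $\int S_{\eta,1}\diff H_1$, and collecting the terms gives \eqref{eq:mixture}. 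Conversely, given \eqref{eq:mixture} I define $h(x_2):=H_2((-\infty,-x_2])$ (nonnegative, decreasing, with limit $0$ at $+\infty$) and $L'(x_1,y):=\int S_{\eta,1}\diff H_1\in\mathcal{L}$; then \eqref{eq:keyid} rewrites \eqref{eq:mixture} as an instance of \eqref{eq:loss}, so consistency for $\underline{T}$ follows from \citet{Frongillo2018}. For the strictness claim I would check that $H_2$ charging every open interval is equivalent to $h$ being positive and strictly decreasing --- strict monotonicity amounts to $H_2((-x_2',-x_2])>0$ for all $x_2<x_2'$, and positivity of $h$ follows likewise --- whence the strict-consistency criterion of \citet{Frongillo2018} applies.

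The step I expect to be delicate is the Fubini computation \eqref{eq:keyid}, because $H_2$ may carry atoms and $h$ need not be continuous; the exchange of integration order and the bookkeeping of half-open versus closed rays (with opposite signs for $x_2>0$ and $x_2<0$) must be carried out at the level of Stieltjes integrals rather than densities. The second subtle point is the boundary constant $c=\lim_{s\to\infty}h(s)$: one must recognise that the coefficient of $L(x_1,y)$ produced by the $S_{\eta,2}$-family necessarily tends to $0$, so a nonzero limit of $h$ cannot be captured there and must be absorbed into the first component --- which is exactly why the statement allows two (possibly different) losses $L,L'\in\mathcal{L}$ rather than a single one.
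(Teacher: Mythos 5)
Your proposal is correct and follows essentially the same route as the paper's proof: both absorb the constant $c=\lim_{s\to\infty}h(s)$ (the paper's $A$) into the first component via $L'+cL\in\mathcal{L}$, define $H_2$ through $t\mapsto h(-t)-c$, and reduce everything to the identity $\int S_{\eta,2}\,\diff H_2(\eta)=H(x_2)+h(x_2)\bigl(L(x_1,y)-x_2\bigr)$, with consistency and strictness imported from Theorem 1 of \citet{Frongillo2018}. Your Fubini computation is simply the paper's integration-by-parts step for the Stieltjes integral $\int_{-x_2}^0\eta\,\diff H_2(\eta)$ carried out explicitly, so the two arguments coincide in substance.
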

\begin{proof}
The consistency follows directly from Theorem 1 in \cite{Frongillo2018}. 
Recall that $h$ is decreasing and nonnegative and $H(r)=\int_{0}^r h(x) \diff x$. To see that the loss functions in \eqref{eq:loss} with loss $L' \in \mathcal{L}$ can be written as in \eqref{eq:mixture}, define $A: =\lim_{x \to \infty} h(x) \geq 0$. Since $h \geq 0$, we can define the measure $H_2$ by $H_2((-\infty, t]) = h(-t)-A \geq 0$ for all $t \in \R$. Without loss of generality we can assume $h$ satisfies $\lim_{x \to \infty}h(x) = 0$. Indeed, we can define $\underline{h}=h-A$ then $H$ becomes $\underline{H}(x)=H(x)-xA$ and $\tilde{L}(x_1,x_2,y)=\underline{\tilde{L}}(x_1,x_2,y)+AL(x_1,y)$. Then, $\underline{\tilde{L}}(x_1,x_2,y)= L'(x_1,y)-AL(x_1,y) + H(x_2) + h(x_2)(L(x_1,y)-x_2)$. Thus, adding constants to $h$ corresponds to modifying the loss function $L'$. Moreover, since $L, L' \in \mathcal{L}$ we have that $L'+AL \in \mathcal{L}.$ Hence, we can assume that $A=0$, then $H_2((-\infty,x]) = h(-x)$ for all $x \in \mathbb{R}$ and
\begin{align*}
	h(x_2) = \int_{-\infty}^{-x} \diff H_2(\eta).
\end{align*}
Then we have
\begin{align*}
	 \int S_{\eta,2}(x_1,x_2,y) \, \diff H_2(\eta) 
	 =&  L(x_1,y) h(x_2)
	 -  \int_{-x_2}^0 \eta \diff H_2(\eta).
\end{align*}
Integration by parts yields
\begin{align*}
	 \int S_{\eta,2}(x_1,x_2,y) \, \diff H_2(\eta) 
	 =&   L(x_1,y)h(x_2)
	 -  x_2 h(x_2) +H(x_2).
\end{align*}
Restricting the choice of $L'$ to $\mathcal{L}$ ensures the existence of the mixture representation for $L'(x_1,y)$.
\end{proof}

The following two examples discuss the mixture representations for the pairs $(q_\alpha,\ES_\alpha)$ and $(\E,\var)$ in more detail.

\begin{example}\label{ex:2.2}
As mentioned in the introduction, a popular but non-elicitable risk measure is expected shortfall. In this article we adopt the sign convention used by \cite{Frongillo2018} which is different from \citet{Fissler2016,Ziegel2020}.

For a given level $\alpha \in (0,1)$ the loss function 
\[
L(x_1,y)=\frac{1}{\alpha}\one\{ y\leq x_1\}(x_1-y)- x_1
\]
elicits the 
$\alpha$-quantile $q_\alpha(P)$.
The expected shortfall $\ES_\alpha$ is the corresponding Bayes risk, that is, 
\[
\ES_\alpha(P) = \inf_{x_1\in\R} L(x_1,P).
\]
The elementary loss functions of Lemma \ref{lem:mixture} are given by
%
\begin{align*}
	S_{\eta,1}(x_1, y) 
	&= (\one\{\eta \leq x_1\} - \one\{\eta \leq y\}) 
	(\one\{\eta > y \}-\alpha)\\
	S_{\eta,2}(x_1,x_2,y) 
	&= \one\{\eta \leq -x_2\} 
	\left(\tfrac{1}{\alpha}\one\{ y\leq x_1\}(x_1-y)- (x_1-\eta)\right) - \one\{\eta \le 0\}\eta.
\end{align*}
In fact, all loss functions consistent for the pair $(q_\alpha,\ES_\alpha)$ are of the form \eqref{eq:loss}, or equivalently, \eqref{eq:mixture}; see \citet{Ziegel2020}.
Due to the different sign conventions mentioned previously, the mixture representation in \citet{Ziegel2020} corresponds to $L(x_1,-x_2,y)$ (up to normalization).
\end{example}

\begin{example}
The squared loss $L(x_1,y)=(x_1-y)^2$ elicits the expectation $\E(P)$.
The corresponding Bayes risk is the variance $\var(P)$.
Thus, the pair $(\E,\var)$ is elicitable.
The elementary loss functions of Lemma \ref{lem:mixture} are given by
\begin{align*}
	S_{\eta,1}(x_1, y) 
	&= (\one\{\eta \leq x_1\} - \one\{\eta \leq y\}) 
	(\eta-y)\\
	S_{\eta,2}(x_1,x_2,y) 
	&= \one\{\eta \leq -x_2\} 
	\left((x_1-y)^2+\eta)\right)- \one\{\eta \le 0\}\eta.
\end{align*}
In contrast to the pair $(q_\alpha,\ES_\alpha)$ not all consistent loss functions for $(\E,\var)$ are of this form; see \citet[Section 3.1]{Frongillo2018}.
\end{example}

\section{Isotonic regression}\label{sec:isotonic}
\subsection{General results}\label{sec:general}
Suppose we have pairs of observations $(z_1,y_1), \dots, (z_n, y_n)$, where $y_1, \dots, y_n$ are real-valued, the covariates $z_1, \dots, z_n$ are equipped with a total order, and $z_1<z_2<\dots<z_n$.
Repeated observations can easily be accommodated; see Remark 3.1 in \citet{Jordan2019}. We aim to fit a function $\hat{g}=(\hat{g}_1, \hat{g}_2): \{z_1, \dots, z_n\}^2  \to \R^2$ to these observations, such that $g_1$ is isotonic and models the conditional functional $T$ given the covariates $z_i$, and $g_2$ is antitonic and models the conditional Bayes risk $\underline{L}$ given at \eqref{eq:BayesRisk} given the covariates $z_i$ for some consistent loss function $L\in \mathcal{L}$.
That is, if $z_i \leq z_j$ then $\hat{g}_1(z_i) \leq \hat{g}_1(z_j)$ and $\hat{g}_2(z_i) \geq \hat{g}_2(z_j)$, respectively.
Considering the pair $(q_\alpha,\ES_\alpha)$ for example, one would be interested in an isotonic $\hat{g}_1$ and an antitonic $\hat{g}_2$ since $q_\alpha(Y_1) \leq q_\alpha(Y_2)$ and $\ES_\alpha(Y_1) \geq \ES_\alpha(Y_2)$ whenever $Y_1 \leq Y_2$ almost surely. Keeping this leading example in mind, we focus on the case that $g_1$ is isotonic, or increasing, and $g_2$ is decreasing, or antitonic. Adaptations of the results, where $g_1$ is desired to be decreasing or $g_2$ to be increasing are straight forward.

Following the literature on loss functions for expected shortfall, we first consider loss functions of the form \eqref{eq:loss} with $L' = 0$ \citep{Nolde2017,PattonETAL2019}. When studying simultaneous optimality of solutions in Section \ref{sec:simultaneously}, we also consider $L' \not=0$.
Let $h:\mathbb{R} \to (0,\infty)$ be decreasing with $\lim_{x \to \infty}h(x) = 0$, $H(r) = \int_0^r h(x)\diff x$. The goal is to minimize
\begin{align}
\label{eq:ISOREG}
\sum_{i=1}^n \tilde{L}(g_1(z_i),g_2(z_i),y_i) = \sum_{i=1}^n \left(H(g_2(z_i)) + h(g_2(z_i))(L(g_1(z_i),y_i) - g_2(z_i))\right)
\end{align}
over all functions $g = (g_1,g_2):\{z_1, \dots, z_n\}^2  \to \R^2$ such that $g_1$ is increasing and $g_2$ is decreasing. Keeping either $g_1$ or $g_2$ fixed, we can directly give an optimal solution with respect to the other component.

\begin{proposition}\label{prop:fixgi}
\begin{enumerate}
\item[(a)] Let $g_1:\{z_1,\dots,z_n\} \to \mathbb{R}$ be given. Then, the optimal antitonic solution $\hat{g_2}$ of \eqref{eq:ISOREG} with $g_1$ fixed is given by
\[
\hat{g}_2(z_\ell) = -\min_{j \geq \ell} \max_{i \leq j} -\E(\bar{P}_{i:j}) = -\max_{i \leq \ell} \min_{j\geq i} - \E(\bar{P}_{i:j}),\quad \ell = 1,\dots,n,
\]
where $\bar{P}_{i:j}$ is the empirical distribution of $L(g_1(z_i),y_i),\dots,L(g_1(z_j),y_j)$.
\item[(b)] Let $g_2:\{z_1,\dots,z_n\} \to \mathbb{R}$ be given. Then, any optimal isotonic solution $\hat{g}_1$ of \eqref{eq:ISOREG} with $g_2$ fixed satisfies
\[
\min_{j \geq \ell} \, \max_{i \leq j} T^-(P^w_{i:j}) \leq \hat{g}_1(z_\ell) \leq \max_{i \leq \ell} \, \min_{j \geq i} T^+(P^w_{i:j}),
\]
\end{enumerate}
where $P^w_{i:j}$ is the weighted empirical distribution of $y_i,\dots,y_j$ with weights proportional to $h(g_2(z_i)),\dots,h(g_2(z_j))$.
\end{proposition}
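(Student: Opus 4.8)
The plan is to handle the two parts separately, in each case freezing one coordinate and reducing the optimisation to a one-dimensional isotonic regression problem to which the min--max characterisation of \cite{Jordan2019} applies.

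For part (a), I would fix $g_1$ and abbreviate $a_i := L(g_1(z_i),y_i)$, so that the objective \eqref{eq:ISOREG} decouples across $i$ as $\sum_{i=1}^n \ell(g_2(z_i),a_i)$ with $\ell(x,v):=H(x)+h(x)(v-x)$. The first step is to show that $\ell$ is a consistent loss function for the mean. Writing $\E_P[\ell(x,V)] = H(x)+h(x)(\E_P[V]-x)$ and using $H(x)-H(\mu)=\int_\mu^x h(u)\diff u$ with $\mu=\E_P[V]$, one obtains $\E_P[\ell(x,V)]-\E_P[\ell(\mu,V)]=\int_\mu^x(h(u)-h(x))\diff u\ge 0$, where nonnegativity in both cases $x\gtrless\mu$ follows from monotonicity of $h$; thus the mean minimises the expected loss, strictly when $h$ is strictly decreasing. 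Hence minimising $\sum_i\ell(g_2(z_i),a_i)$ over antitonic $g_2$ is exactly the antitonic mean regression of the pseudo-observations $a_i$. The second step is to reduce antitonicity to isotonicity via $a_i\mapsto -a_i$, $g_2\mapsto -g_2$, under which block means transform as $\E(\bar P_{i:j})\mapsto -\E(\bar P_{i:j})$. Since the mean is a single-valued identifiable functional, the isotonic solution is unique and given by the min--max formula of \cite{Jordan2019}; flipping signs back yields both displayed expressions for $\hat g_2$, the two forms agreeing precisely because $T^-=T^+=\E$ for the mean.

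For part (b), I would fix $g_2$ and set $w_i:=h(g_2(z_i))>0$. In \eqref{eq:ISOREG} the terms $H(g_2(z_i))-h(g_2(z_i))g_2(z_i)$ are constants that do not influence the minimisation over $g_1$, so the problem reduces to minimising $\sum_{i=1}^n w_i L(g_1(z_i),y_i)$ over isotonic $g_1$. This is a weighted isotonic regression problem for the identifiable functional $T$ elicited by $L\in\mathcal{L}$, with strictly positive weights $w_i$. These positive weights are accommodated by the weighted empirical distributions $P^w_{i:j}$, exactly as in the weighted formulation of \cite{Jordan2019}; since $T$ may be interval-valued, the solution need not be unique, and the lower and upper min--max expressions built from $T^-$ and $T^+$ sandwich every optimal isotonic $\hat g_1$, which is the claimed two-sided bound.

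The routine verifications are the decoupling of \eqref{eq:ISOREG} and the bookkeeping of weights and constants; the one genuinely substantive step is the identification in part (a) of the Bayes-risk component $H(x)+h(x)(v-x)$ as a consistent (Bregman-type) loss for the mean, since this is what converts an otherwise unfamiliar objective into a standard mean-regression problem and pins down $\E(\bar P_{i:j})$ as the relevant block functional. Additional care is needed with the antitonic orientation in (a) and with the interval-valuedness of $T$ in (b), which is what forces a two-sided bound there rather than the equality obtained in (a).
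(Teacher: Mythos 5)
Your proof is correct and takes essentially the same approach as the paper's: in (a) you recognize the fixed-$g_1$ objective as a Bregman (mean-consistent) loss and reduce antitonic to isotonic regression by a sign flip, and in (b) you strip away the constant terms and reduce to weighted isotonic regression for $T$, in both cases invoking the min--max characterization of Jordan et al.\ (2019), exactly as the paper does. The only difference is one of detail: you verify explicitly that $H(x)+h(x)(v-x)$ is consistent for the mean, a step the paper simply asserts by calling the loss a Bregman loss function.
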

\begin{proof}
\begin{enumerate}

\item[(a)]
Notice that for fixed $g_1$, the loss function \eqref{eq:ISOREG} is a Bregman loss function. Moreover, $\hat{g}_2$ is isotonic if and only if $-\hat{g}_2$ is antitonic. Thus, we can solve the classical isotonic regression problem as in \cite{Jordan2019} for $-\hat{g}_2$ to obtain the optimal antitonic $\hat{g}_2$.  

\item[(b)] Minimizing \eqref{eq:ISOREG} for fixed $g_2$ is equivalent to minimizing
\begin{align*}
	\sum_{i=1}^n h(g_2(z_i))L(g_1(z_i),y_i).
\end{align*}
Using the same reasoning as in Remark 3.1 in \cite{Jordan2019}, we have  $h(g_2(z_i))L(g_1(z_i),y_i)=L(g_1(z_i),P_{i:i}^{w})$. Finally, Proposition 3.6 in \cite{Jordan2019} yields the result.\qedhere
\end{enumerate}

\end{proof}

If $T$ is singleton-valued, Proposition \ref{prop:fixgi} yields the existence and a necessary conditions on any solution to \eqref{eq:ISOREG}.
\begin{corollary}\label{cor:singleton}
If $T$ is singleton-valued a solution $\hat{g_1}, \hat{g}_2$ to \eqref{eq:ISOREG} exists. In particular, we have
\[
\hat{g}_2(z_\ell) = -\min_{j \geq \ell} \max_{i \leq j} -\E(\bar{P}_{i:j}) = -\max_{i \leq \ell} \min_{j\geq i} - \E(\bar{P}_{i:j}),
\]
where $\bar{P}_{i:j}$ is the empirical distribution of $L(\hat{g}_1(z_i),y_i),\dots,L(\hat{g}_1(z_j),y_j)$, and
\[
\hat{g}_1(z_\ell)
=\min_{j \geq \ell} \, \max_{i \leq j} T(P^w_{i:j}) 
= \max_{i \leq \ell} \, \min_{j \geq i} T(P^w_{i:j}),
\]
where $P^w_{i:j}$ is the weighted empirical distribution of $y_i,\dots,y_j$ with weights proportional to $h(\hat{g}_2(z_i)),\dots,h(\hat{g}_2(z_j))$.
\end{corollary}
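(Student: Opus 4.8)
The plan is to split the statement into its two assertions: the two displayed formulas are \emph{necessary} conditions satisfied by any joint minimizer, and a joint minimizer actually \emph{exists}. The formulas follow almost directly from Proposition~\ref{prop:fixgi}, so the real work is the existence claim.

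For the characterization, suppose $(\hat g_1,\hat g_2)$ minimizes \eqref{eq:ISOREG} over the cone of pairs with $g_1$ increasing and $g_2$ decreasing. Freezing the first component at $\hat g_1$, the map $\hat g_2$ must itself be an optimal antitonic solution with $g_1=\hat g_1$ fixed, so Proposition~\ref{prop:fixgi}(a) yields the displayed min--max expression for $\hat g_2$ in terms of the empirical distribution $\bar P_{i:j}$ of $L(\hat g_1(z_i),y_i),\dots,L(\hat g_1(z_j),y_j)$. Symmetrically, freezing $\hat g_2$, the component $\hat g_1$ must be an optimal isotonic solution, so Proposition~\ref{prop:fixgi}(b) applies with weights proportional to $h(\hat g_2(z_i))$. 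Since $T$ is singleton-valued we have $T^-(P^w_{i:j})=T^+(P^w_{i:j})=T(P^w_{i:j})$, so the two bounds in part~(b) coincide and collapse to $\hat g_1(z_\ell)=\min_{j\ge\ell}\max_{i\le j}T(P^w_{i:j})$; the equality with the max--min expression is the standard interchange identity for the isotonic min--max representation \citep{Jordan2019}. This gives both displayed formulas, noting that they are \emph{coupled}: $\hat g_2$ enters through the weights defining $P^w_{i:j}$, and $\hat g_1$ enters through the losses defining $\bar P_{i:j}$.

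It remains to prove existence, which is the crux. The objective in \eqref{eq:ISOREG} is continuous on the closed monotone cone $C\subseteq\R^{2n}$, so it suffices to show that a minimizing sequence can be taken inside a compact set. I would first record the elementary inequality $H(x_2)-x_2h(x_2)=\int_{0}^{x_2}\bigl(h(u)-h(x_2)\bigr)\,\diff u\ge 0$, valid for all $x_2$ because $h$ is decreasing; writing each summand of \eqref{eq:ISOREG} as $\bigl(H(g_2(z_i))-g_2(z_i)h(g_2(z_i))\bigr)+h(g_2(z_i))L(g_1(z_i),y_i)$ then both bounds the objective below and controls its behaviour as coordinates escape to infinity. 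Concretely, I would rule out each unbounded direction separately: if some $g_1(z_i)\to\pm\infty$ the coercivity of $L(\cdot,y_i)$ forces the objective up; if some $g_2(z_i)\to-\infty$ the nonnegative term $H-x_2h$ grows; and if some $g_2(z_i)\to+\infty$ then, using $\lim_{x\to\infty}h(x)=0$, that summand tends to $\int_0^\infty h(u)\,\diff u$ (read as $+\infty$ if $h$ is not integrable), which strictly exceeds its value at a finite configuration. Hence a minimizing sequence stays bounded, and continuity of \eqref{eq:ISOREG} together with closedness of $C$ yields a minimizer, to which the characterization above applies.

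The step I expect to be most delicate is this last one, the simultaneous control of the coordinates: the weight $h(g_2(z_i))$ and the loss $L(g_1(z_i),y_i)$ interact, and since $h$ need not be bounded the product $h(g_2(z_i))L(g_1(z_i),y_i)$ can be large and negative when $g_2(z_i)\to-\infty$ while the pointwise loss minimum is negative (as for the quantile loss of Example~\ref{ex:2.2}). One must verify that the growth of $H(g_2(z_i))-g_2(z_i)h(g_2(z_i))$ dominates, so that the objective is genuinely coercive in the $g_2\to-\infty$ directions. An alternative, possibly cleaner, route to existence is an alternating-minimization scheme: the maps $g_2\mapsto\hat g_1$ of Proposition~\ref{prop:fixgi}(b) (well defined and single-valued because $T$ is singleton-valued) and $g_1\mapsto\hat g_2$ of Proposition~\ref{prop:fixgi}(a) each weakly decrease \eqref{eq:ISOREG}, producing a nonincreasing, bounded-below sequence of objective values whose iterates—bounded by the same estimates—subconverge to a fixed point, which is necessarily a joint minimizer satisfying both formulas.
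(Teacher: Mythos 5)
Your characterization step is sound and matches the paper: any joint minimizer must be coordinate-wise optimal, so Proposition \ref{prop:fixgi} applies with the other component frozen, and singleton-valuedness collapses the two bounds in part (b). But the existence step, which you correctly identify as the crux, is where your argument breaks, and it is also where you diverge from the paper. The paper never uses compactness: it invokes the min--max representation (Proposition 4.17 in \cite{Jordan2019}) to note that every candidate solution is constant on the elements of a partition of $\{1,\dots,n\}$, with values given by the functional of the pooled empirical distributions; since there are only finitely many partitions there are only finitely many candidate solutions, and a minimum over a finite set exists. Your coercivity claims, by contrast, are false in the paper's generality. First, $L(\cdot,y_i)$ need not be coercive: the elementary losses \eqref{eq:1} take only two values as functions of $x_1$, so any $L\in\mathcal{L}$ with $\int|V(\eta,y)|\,\diff H_1(\eta)<\infty$ (for instance a Gaussian mixing measure $H_1$, which still gives strict consistency) is \emph{bounded} in $x_1$, and the objective does not blow up as $g_1(z_i)\to\pm\infty$. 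Second, $H(x_2)-x_2h(x_2)$ need not grow as $x_2\to-\infty$: Section \ref{sec:general} only requires $h$ positive, decreasing, with limit zero at $+\infty$, so $h(x)=1/(1+e^x)$ is admissible, and there $H(x_2)-x_2h(x_2)\to\log 2$. So the objective is genuinely not coercive. What actually saves boundedness is monotonicity, not growth: the Bregman summand is non-increasing to the left of $L(g_1(z_i),y_i)$ and non-decreasing to the right, and order sensitivity of $L$ gives the analogous statement in $x_1$, so any feasible pair can be clamped into a fixed compact box without increasing the loss and without destroying monotonicity. You would need to replace your escape-to-infinity estimates by this clamping argument (and also address lower semicontinuity, which is not automatic since neither $h$ nor $L(\cdot,y)$ is assumed continuous) to rescue the compactness route.

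Your fallback argument is not a repair but a circularity. A limit of the alternating scheme is a pair in which each component is optimal given the other, i.e.\ a coordinate-wise optimum; asserting that such a fixed point ``is necessarily a joint minimizer'' is exactly the assertion that alternating minimization cannot stall at a non-global stationary pair. The paper explicitly states, at the end of Section \ref{sec:general}, that a rigorous proof of this is an open problem, so it cannot be invoked as a lemma; coordinate-wise optimality does not imply joint optimality for this problem, which is precisely why the paper proves Propositions \ref{prop:main} and \ref{prop:minimizers} in Section \ref{sec:solutions} rather than arguing via the algorithm.
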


\begin{proof}
For all solutions that are given by a $\min$-$\max$-representation with respect to some functional $\tilde{T}$ there exists a partition $\mathcal{Q}$ of the index set with $g(z_\ell)=\tilde{T}(Q)$, $\ell \in Q$, $Q \in \mathcal{Q}$ \citep[Proposition 4.17]{Jordan2019}.
Since there exist only finitely many partitions of the index set $\{1, \dots,n\}$ there exist only finitely many possible solutions. Therefore, an optimal solution has to exist. In particular, $\hat{g}_1$ has to be the solution obtained from Proposition \ref{prop:fixgi} when $\hat{g}_2$ is treated as fixed and vice versa. Otherwise we could replace $\hat{g}_1$ by the solution obtained from Proposition \ref{prop:fixgi} to obtain a smaller loss. Similarly, we could replace $\hat{g}_2$ by the solution in Proposition \ref{prop:fixgi} to obtain a smaller loss.
\end{proof}

Furthermore, Proposition \ref{prop:fixgi} suggests an algorithm for finding minimizers of \eqref{eq:ISOREG}, which roughly consists of the following steps:
\begin{enumerate}
\item Take $g_2$ constant and find the optimal $\hat{g}_1^{(1)}$.
\item Find the optimal $\hat{g}_2^{(1)}$ given $\hat{g}_1^{(1)}$.
\item Find the optimal $\hat{g}_1^{(2)}$ given $\hat{g}_2^{(1)}$.
\item Iterate steps 2 and 3 until $\hat{g}_1^{(k)} = \hat{g}_1^{(k-1)}$.
\end{enumerate}

There is a problem with this algorithm if $T$ is interval-valued, since then, the solution in part (b) of Proposition \ref{prop:fixgi} is not unique. It turns out that it is best to choose the smallest possible solution corresponding to $T^-$, see Section \ref{sec:solutions} for details. 

\cite{Fissler2019} show that the expectation of consistent loss functions has no local minima. The optima in the isotonic regression case are more complex. But we believe that order sensitivity can be exploited to argue that the above algorithm can only converge to a global optimum. Numerical considerations where we perturbed the initial solutions to see whether they still converge to the same solution reinforced our suspicions that the algorithm does not converge to a saddle point. However, a rigorous mathematical proof for this conjecture is currently an open problem.

\subsection{Solution to the optimization problem}\label{sec:solutions}

In this somewhat technical section, will show that for fixed $g_2$ it is best to choose  
\begin{align}\label{eq:8}
	\begin{split}
	\hat{g}_1^-(z_\ell)&
	:= \min_{j \geq \ell} \max_{i \leq j} T^-(P_{i:j}^w)
	= \max_{i \leq \ell} \min_{j\geq i} T^-(P_{i:j}^w),
	\end{split}
\end{align}
where $P^w$ is the weighted empirical distribution with weights proportional to $h(g_2(z_i)),\dots, h(g_2(z_j))$, to minimize \eqref{eq:ISOREG}; see Propositions \ref{prop:main} and \ref{prop:minimizers}.

We denote $T^\lambda = \lambda T^- + (1-\lambda)T^+$, $\lambda \in [0,1]$, where $T^-$ and $T^+$ are the lower and upper bound of $T$, respectively. In \eqref{eq:8} the indices $\ell, i$ and $j$ are all elements of the index set $\{1, \dots, n\}$.
If we were to restrict $\ell, i$ and $j$ to be elements of the subset $\{1, \dots, m\}$, $m\leq n$, we would obtain an optimal solution on the subset $(z_1, y_1) ,\dots, (z_m,y_m)$ of the original data set. In the following, we denote an optimal solution on this subset by $\hat{g}_{1;1:m}$ and by $\restr{\hat{g}_1}{1:m}$ we denote the optimal solution on the original set restricted to $\{z_1, \dots, z_m\}$.

The following auxiliary result relates $\hat{g}_{1;1:m}$ to $\restr{\hat{g}_1}{1:m}$ in the case where $\hat{g}_1$ is given by a $\min$-$\max$-representation.
\begin{lemma}\label{lem:order}
Assume that
\begin{align*}
	\hat{g}_1(z_\ell)&
	:= \min_{j \geq \ell} \max_{i \leq j} T^\lambda(P_{i:j}^w)
	= \max_{i \leq \ell} \min_{j\geq i} T^\lambda(P_{i:j}^w)
\end{align*}
for some $\lambda \in [0,1]$. Then we have $\restr{\hat{g}_1}{1:m} \leq \hat{g}_{1;1:m}$.
\end{lemma}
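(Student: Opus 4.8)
The plan is to read the inequality directly off the $\min$-$\max$ representation, exploiting the elementary fact that enlarging the index set over which an infimum is taken can only decrease its value. Fix a point $z_\ell$ with $\ell \le m$. Since $\restr{\hat{g}_1}{1:m}(z_\ell)$ is by definition just the value of the full solution at $z_\ell$, the hypothesis gives $\restr{\hat{g}_1}{1:m}(z_\ell) = \min_{\ell \le j \le n}\max_{1 \le i \le j} T^\lambda(P^w_{i:j})$, where the outer minimum ranges over all $j \in \{\ell,\dots,n\}$. By contrast, $\hat{g}_{1;1:m}(z_\ell)$ is obtained from the same formula but with all indices confined to $\{1,\dots,m\}$, that is, $\hat{g}_{1;1:m}(z_\ell) = \min_{\ell \le j \le m}\max_{1 \le i \le j} T^\lambda(P^w_{i:j})$.

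First I would observe that the two computations involve literally the same objects: because $g_2$ is held fixed throughout, the weights $h(g_2(z_i)),\dots,h(g_2(z_j))$, and hence the weighted empirical distributions $P^w_{i:j}$, are identical in both settings, so for each fixed $j$ the inner maximum $\max_{1 \le i \le j} T^\lambda(P^w_{i:j})$ agrees term by term. The only difference lies in the range of the outer minimization: it is $\{\ell,\dots,n\}$ for the full solution and $\{\ell,\dots,m\}$ for the sub-sample solution, with $\{\ell,\dots,m\}\subseteq\{\ell,\dots,n\}$ since $m \le n$. Minimizing the same function over a superset yields a value no larger, which gives $\restr{\hat{g}_1}{1:m}(z_\ell) \le \hat{g}_{1;1:m}(z_\ell)$; as $\ell \le m$ was arbitrary, this is exactly the asserted pointwise inequality.

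I expect essentially no analytic obstacle here, since the content is purely the monotonicity of $\inf$ under enlarging its domain. The single point requiring care, and where a careless argument could stall, is the choice of representation: the $\min$-$\max$ form is the convenient one precisely because the range that grows from $m$ to $n$ sits on the outer minimization, so the inequality falls out immediately. Using the $\max$-$\min$ form one reaches the same conclusion through the dual observation that the inner minimization range grows while the outer maximization range $\{1,\dots,\ell\}$ stays fixed. Accordingly, the only genuine work is the bookkeeping confirming that the inner extremal terms coincide and that the enlarged index set lies on the minimization side.
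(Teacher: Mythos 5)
Your proof is correct and is essentially identical to the paper's own argument: both read the inequality directly off the $\min$-$\max$ representation, noting that the subsample solution restricts the outer minimization to $j \le m$ while the inner maxima and the weighted distributions $P^w_{i:j}$ coincide, so minimizing over the larger index set can only give a smaller value.
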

\begin{proof}
Notice that
\begin{equation*}
	\hat{g}_{1;1:m}(z_\ell)
	= 	\mathop{\min_{j \geq \ell}}_{j\leq m} 
	\max_{i \leq j} T^\lambda(P_{i:j}^w)
	\geq \min_{j \geq \ell}
	\max_{i \leq j} T^\lambda(P_{i:j}^w)
	= \hat{g}_1 (z_\ell). \qedhere
\end{equation*}
\end{proof}

We recall some observations made in  \cite{Jordan2019}. For fixed weights, that is, for fixed $g_2$, we can minimize 
\begin{align*}
	\sum_{i=1}^n \one\{\eta \leq \hat{g}_1(z_i)\} V(\eta, P^w_{i:i}),
	\quad \text{for all }
	\eta \in \R
\end{align*}
to obtain a solution to \eqref{eq:ISOREG}. Because we want $\hat{g}_1$ to be isotonic, this means that for a given $\eta \in \R$ we have to find an index $\ell \in \{1. \dots, n+1\}$ that minimizes
\begin{align}\label{eq:index}
	\sum_{i=\ell}^n V(\eta, P^w_{i:i}).
\end{align}
The search for the optimal index $\ell$ needs to be conducted for every $\eta \in \R$. For $\eta \in \R$, we denote the set of indices minimizing \eqref{eq:index} by $I_{1:n}(\eta)$.

Recall that optimal solutions $\hat{g}_1$ are in one-to-one correspondence to increasing, left-continuous functions $\iota: \R \to \{1, \dots, n+1\}$ with $\iota(\eta)\in I_{1:n}(\eta)$, for all $\eta \in \R$, in the sense that 
\begin{align*}
	\inf \{ \eta : \iota(\eta)>\ell \} 
	= \hat{g}_1 (z_\ell)
	= \max\{\eta: \iota(\eta)\leq \ell\}.
\end{align*}
Thus, any solution to the isotonic regression problem yields a minimizing index $\iota(\eta)$ for every $\eta \in \R$.

The next result shows that if $\hat{g}_1$ is a solution to the isotonic regression problem \eqref{eq:ISOREG} with $\hat{g}_1(z_m)<\hat{g}_1(z_{m+1})$ then $\restr{\hat{g}_1}{1:m}$ is an optimal solution to the isotonic regression problem \eqref{eq:ISOREG} on the subsample $(z_1,y_1), \dots, (z_m,y_m)$.

\begin{lemma}\label{lem:index}
We have that $I_{1:n}(\eta) \cap \{1, \dots,m+1\} \subseteq I_{1:m}(\eta)$, where $ I_{1:m}(\eta)$ is the set of minimizing indices for the isotonic regression problem \eqref{eq:ISOREG} on the subsample $(z_1,y_1), \dots, (z_m,y_m)$.
\end{lemma}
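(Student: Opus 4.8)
The plan is to recast both index-minimization problems as the minimization of a single real-valued objective over a discrete domain, and then observe that on the relevant domain the two objectives differ only by an additive constant. Fix $\eta \in \R$ and recall that $I_{1:n}(\eta)$ is the set of minimizers over $\ell \in \{1,\dots,n+1\}$ of
\[
F_n(\ell) := \sum_{i=\ell}^n V(\eta, P^w_{i:i}),
\]
while $I_{1:m}(\eta)$ is the set of minimizers over $\ell \in \{1,\dots,m+1\}$ of $F_m(\ell) := \sum_{i=\ell}^m V(\eta, P^w_{i:i})$, both with the convention that the empty sum equals zero. The key point to keep in mind is that the weights defining $P^w_{i:i}$ for $i \leq m$ are the same in both problems, since they depend only on $g_2(z_i)$.

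First I would split the full objective at the index $m$: for every $\ell \in \{1,\dots,m+1\}$,
\[
F_n(\ell) = \sum_{i=\ell}^m V(\eta, P^w_{i:i}) + \sum_{i=m+1}^n V(\eta, P^w_{i:i}) = F_m(\ell) + C,
\]
where $C := \sum_{i=m+1}^n V(\eta, P^w_{i:i})$ does not depend on $\ell$. Hence $F_n$ and $F_m$ coincide up to the additive constant $C$ on the common domain $\{1,\dots,m+1\}$, so they have exactly the same minimizers over that restricted domain.

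To conclude, I would take any $\ell \in I_{1:n}(\eta) \cap \{1,\dots,m+1\}$. By definition $F_n(\ell) \leq F_n(\ell')$ for every $\ell' \in \{1,\dots,n+1\}$, and in particular for every $\ell' \in \{1,\dots,m+1\}$. Since $\ell$ itself lies in $\{1,\dots,m+1\}$, the decomposition above applies to both sides, and subtracting $C$ yields $F_m(\ell) \leq F_m(\ell')$ for all $\ell' \in \{1,\dots,m+1\}$, which is precisely the assertion $\ell \in I_{1:m}(\eta)$.

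The argument is short, and the only genuinely delicate point is the logical step in the last paragraph: a global minimizer of $F_n$ restricts to a minimizer of the subproblem only because the hypothesis guarantees that $\ell$ actually lies in the subdomain $\{1,\dots,m+1\}$. I would state this explicitly rather than leaving it implicit, since this is exactly where $\ell \leq m+1$ is used, and it is the mechanism that ultimately underlies the claim preceding the lemma that $\restr{\hat{g}_1}{1:m}$ is optimal on the subsample.
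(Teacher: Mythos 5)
Your proof is correct and takes essentially the same approach as the paper's: both split the objective at index $m$ so that, on the common domain $\{1,\dots,m+1\}$, the full-sample and subsample objectives differ only by the constant $C=\sum_{i=m+1}^n V(\eta,P^w_{i:i})$, whence a global minimizer lying in the subdomain is a minimizer of the subproblem. The only (cosmetic) difference is that the paper treats $\ell=m+1$ as a separate case, while your empty-sum convention handles it uniformly with $\ell\leq m$.
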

\begin{proof}
Let $\ell \in I_{1:n}(\eta) \cap \{1, \dots,m\}$ for some $\eta \in \R$. Therefore, the function 
\begin{align*}
	t_\eta : \{1,\dots,n+1\}\to\R,
	x \mapsto \sum_{i=x}^n V(\eta,P_{i:i}^w)
\end{align*}
has a minimum at $\ell$. We can write
\begin{align*}
	\sum_{i=\ell}^n V(\eta,P_{i:i}^w)
	= \sum_{i=\ell}^m V(\eta,P_{i:i}^w)
	+ \sum_{i=m+1}^n V(\eta,P_{i:i}^w).
\end{align*}
Hence, $\restr{t_\eta}{1:m}$ has also a minimum at $\ell$ and thus $\ell \in I_{1:m}(\eta)$. If $t_\eta$ has a minimum at $\ell=m+1$ then
\begin{align*}
	t_\eta(x)-\sum_{i=m+1}^n V(\eta, P_{i:i}^w) \geq 0
\end{align*}
with equality for $x=m+1$. Thus, $I_{1:n}(\eta) \cap \{1, \dots,m+1\} \subseteq I_{1:m}(\eta)$.
\end{proof}

\begin{corollary}\label{cor:opt}
Let $\hat{g}_1$ be a solution \eqref{eq:ISOREG} with $\hat{g}_1(z_m)<\hat{g}_1(z_{m+1})$. Then we have that $\restr{\hat{g}_1}{1:m}$ is an optimal solution to \eqref{eq:ISOREG} on the subsample $(z_1,y_1), \dots, (z_m,y_m)$.
\end{corollary}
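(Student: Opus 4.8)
The plan is to work through the one-to-one correspondence between optimal solutions and admissible index functions recalled just before Lemma~\ref{lem:index}, and to exhibit an admissible index function for the subsample problem that reproduces $\restr{\hat{g}_1}{1:m}$. Let $\iota:\R\to\{1,\dots,n+1\}$ be the increasing, left-continuous index function associated with $\hat{g}_1$, so that $\iota(\eta)\in I_{1:n}(\eta)$ for every $\eta$ and $\hat{g}_1(z_\ell)=\inf\{\eta:\iota(\eta)>\ell\}=\max\{\eta:\iota(\eta)\le\ell\}$. I would define the candidate subsample index function $\iota_m(\eta):=\min\{\iota(\eta),m+1\}$ and first dispatch the easy structural facts: $\iota_m$ takes values in $\{1,\dots,m+1\}$ and inherits monotonicity and left-continuity from $\iota$ under truncation by a constant. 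I would also verify that $\iota_m$ reproduces the restriction, namely that for $\ell\le m$ one has $m+1>\ell$, so $\min\{\iota(\eta),m+1\}>\ell$ iff $\iota(\eta)>\ell$, whence $\inf\{\eta:\iota_m(\eta)>\ell\}=\hat{g}_1(z_\ell)$.

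The substance is then to show $\iota_m(\eta)\in I_{1:m}(\eta)$ for every $\eta$, because the correspondence applied with $m$ in place of $n$ will immediately identify $\restr{\hat{g}_1}{1:m}$ as an optimal solution to \eqref{eq:ISOREG} on the subsample. I would split on the value of $\iota(\eta)$. When $\iota(\eta)\le m+1$ we have $\iota_m(\eta)=\iota(\eta)\in I_{1:n}(\eta)\cap\{1,\dots,m+1\}$, which is contained in $I_{1:m}(\eta)$ by Lemma~\ref{lem:index}; this case is immediate.

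The main obstacle is the remaining regime $\iota(\eta)>m+1$, where $\iota_m(\eta)=m+1$ and I must establish $m+1\in I_{1:m}(\eta)$ directly. This is exactly where the jump hypothesis $\hat{g}_1(z_m)<\hat{g}_1(z_{m+1})$ is needed. Translating the correspondence, $\iota(\eta)=m+1$ holds precisely on the nonempty interval $(\hat{g}_1(z_m),\hat{g}_1(z_{m+1})]$; picking $\eta^\ast$ in this interval gives $m+1=\iota(\eta^\ast)\in I_{1:n}(\eta^\ast)$, hence $m+1\in I_{1:m}(\eta^\ast)$ by Lemma~\ref{lem:index}. The key observation is that membership $m+1\in I_{1:m}(\eta)$ is equivalent to all tail sums $\sum_{i=x}^{m} V(\eta,P^w_{i:i})$, $x\le m$, being nonnegative, the value at index $m+1$ being the empty sum. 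Since $V(\cdot,y)$ is increasing, each such tail sum is increasing in $\eta$, so the property of $m+1$ lying in $I_{1:m}(\eta)$ is preserved as $\eta$ grows.

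Finally, $\iota(\eta)>m+1$ forces $\eta>\hat{g}_1(z_{m+1})\ge\eta^\ast$, so this monotonicity propagates $m+1\in I_{1:m}(\eta)$ from $\eta^\ast$ to all relevant $\eta$, completing the verification. I expect the only delicate points to be the bookkeeping of the half-open interval and the left-continuity convention, so that $\eta^\ast$ can genuinely be chosen with $\eta^\ast\le\hat{g}_1(z_{m+1})$ and $m+1\in I_{1:m}(\eta^\ast)$, together with the harmless degenerate case $\hat{g}_1(z_{m+1})=+\infty$, in which the second regime is vacuous.
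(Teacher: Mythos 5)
Your proof is correct, and it follows the route the paper intends but never writes out: Corollary~\ref{cor:opt} is stated without proof as a consequence of Lemma~\ref{lem:index}, and the implicit argument (visible in Case~1 of the proof of Proposition~\ref{prop:main}) is precisely yours --- restrict the index function $\iota$ of $\hat{g}_1$ to the subsample, check that it still selects minimizing indices, and invoke the one-to-one correspondence with optimal solutions. Where you go beyond the paper is the regime $\iota(\eta)>m+1$: Lemma~\ref{lem:index} only asserts $I_{1:n}(\eta)\cap\{1,\dots,m+1\}\subseteq I_{1:m}(\eta)$, so it is silent when every selected global minimizer exceeds $m+1$, and the claim $\min\{\iota(\eta),m+1\}\in I_{1:m}(\eta)$ genuinely needs a separate argument there. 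Your treatment --- using the jump hypothesis to produce $\eta^\ast$ with $\iota(\eta^\ast)=m+1$, hence $m+1\in I_{1:m}(\eta^\ast)$ by the lemma, and then propagating this to all larger $\eta$ because the tail sums $\sum_{i=x}^m V(\eta,P^w_{i:i})$ are increasing in $\eta$ and their nonnegativity characterizes $m+1\in I_{1:m}(\eta)$ --- is exactly the missing step, and it is also the only place where the hypothesis $\hat{g}_1(z_m)<\hat{g}_1(z_{m+1})$ enters. That this step cannot be skipped is easy to see: without the jump, $\iota$ may pass from values $\leq m$ directly to values $\geq m+2$ (e.g.\ two observations with $y_1>y_2$ pooled by the mean), and then $\restr{\hat{g}_1}{1:m}$ is not optimal on the subsample. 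So your write-up is best described as a correct completion of the paper's one-line justification rather than a different proof; the structural bookkeeping (monotonicity and left-continuity of the truncated index function, recovery of $\restr{\hat{g}_1}{1:m}$ from it, and the degenerate case) is all handled appropriately.
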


We now would like to show that for fixed weights the solution 
\begin{align*}
	\hat{g}_1^-(z_\ell)= \min_{j \geq \ell} \max_{i \leq j} T^-(P_{i:j}^w)
	= \max_{i \leq \ell} \min_{j\geq i} T^-(P_{i:j}^w)
\end{align*}
is most likely to minimize \eqref{eq:ISOREG}. An intuition behind this statement is obtained by combining Lemma \ref{lem:order} with Lemma 3.4 from \cite{Jordan2019}. The statement in Lemma \ref{lem:order} is equivalent to $\restr{\hat{g}_1^-}{(m+1):n}\geq \hat{g}_{1,(m+1):n}^-$. Lemma 3.4 of \cite{Jordan2019} on the other hand, implies that any optimal solution $\hat{g}_{1,(m+1):n}$ on $(z_{m+1}, y_{m+1}),\dots,(z_n,y_n)$ has to satisfy $\hat{g}_{1,(m+1):n}^- \leq \hat{g}_{1,(m+1):n} \leq \hat{g}_{1,(m+1):n}^+$. Thus, $\hat{g}_1^-$ has the highest chance to lie between those bounds. 

To prove this formally the order sensitivity of loss functions is needed. We recall the definition given in \cite{Steinwart2014}.
\begin{definition} Let $\mathcal{P}$ be a class of probability distributions.
A loss function $L:\R \times \R \to \R$ is said to be $\mathcal{P}$\emph{-order sensitive for T}, if the image of $T$ is an interval, and for all $P \in \mathcal{P}$ and all $t_1, t_2 \in \R$ with either $t_2<t_1\leq T^-(P)$ or $ T^+(P) \leq t_1<t_2$, we have $L(t_1,P) < L(t_2,P)$.
\end{definition} 

It follows directly from the definition that order sensitive loss functions are consistent. The reverse holds under weak regularity conditions on the functional; see \citet[Proposition 11]{Lambert2019}. 
The loss functions in class $\mathcal{L}$ are order-sensitive because they are defined via oriented identification function and a positive measure $H_1$ \cite[Theorem 7]{Steinwart2014}. Thus, the loss function $L$ in the following proposition is order sensitive.

\begin{proposition}\label{prop:main}
For fixed ${g}_2$, $\hat{g}_1^-$ given by \eqref{eq:8} and any increasing $\hat{g}_1$ we have
\begin{align*}
	\sum_{i=1}^n \tilde{L}(\hat{g}_1^-(z_i),g_2(z_i),y_i) 
	\leq 
	\sum_{i=1}^n \tilde{L}(\hat{g}_1(z_i),g_2(z_i),y_i).
\end{align*}
\end{proposition}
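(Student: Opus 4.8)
The plan is first to remove from \eqref{eq:ISOREG} everything independent of $g_1$. With $g_2$ fixed and $L'=0$, the summands $H(g_2(z_i))-h(g_2(z_i))g_2(z_i)$ are constants, so minimizing \eqref{eq:ISOREG} over increasing $g_1$ amounts to minimizing $\sum_{i=1}^n h(g_2(z_i))L(g_1(z_i),y_i)=\sum_{i=1}^n L(g_1(z_i),P_{i:i}^w)$. Since $L\in\mathcal L$ has the mixture form \eqref{eq:mix} with elementary losses \eqref{eq:1}, Fubini's theorem rewrites this, up to a term not depending on $g_1$, as $\int\sum_{i=1}^n\one\{\eta\le g_1(z_i)\}V(\eta,P_{i:i}^w)\diff H_1(\eta)$. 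For increasing $g_1$ the set $\{i:\eta\le g_1(z_i)\}$ is a tail $\{\iota(\eta),\dots,n\}$, so the inner sum equals $t_\eta(\iota(\eta))=\sum_{i=\iota(\eta)}^n V(\eta,P_{i:i}^w)$, where $\iota$ is the index function attached to $g_1$. Thus the whole proposition follows once I show that the index function $\iota^-$ attached to $\hat{g}_1^-$ satisfies $\iota^-(\eta)\in I_{1:n}(\eta)$ for every $\eta$: then each integrand $t_\eta(\iota(\eta))-t_\eta(\iota^-(\eta))$ is nonnegative and integrating against $H_1$ gives the claim.

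To prove $\iota^-(\eta)\in I_{1:n}(\eta)$ I would exploit that $V(\eta,\cdot)$ is additive over the sample, $V(\eta,P_{i:j}^w)=\sum_{k=i}^j V(\eta,P_{k:k}^w)$, and that the identification function obeys $\eta\le T^-(P)\Rightarrow V(\eta,P)\le 0$ and $\eta> T^-(P)\Rightarrow V(\eta,P)\ge 0$. I then induct on $n$, peeling off the last level block of $\hat{g}_1^-$. Writing $v_n=\hat{g}_1^-(z_n)=\max_{i\le n}T^-(P_{i:n}^w)$ (the min--max formula \eqref{eq:8} at $\ell=n$) and letting $q$ be the first index of that block, the regime $\eta>v_n$ is immediate: then $T^-(P_{\ell:n}^w)\le v_n<\eta$ for all $\ell$, so every suffix sum $\sum_{k=\ell}^n V(\eta,P_{k:k}^w)\ge 0$ and $t_\eta$ is minimal at $\ell=n+1=\iota^-(\eta)$.

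For $\eta\le v_n$ one has $\iota^-(\eta)\le q$, and when $q>1$ the jump $\hat{g}_1^-(z_{q-1})<\hat{g}_1^-(z_q)$ decouples the min--max formula so that $\restr{\hat{g}_1^-}{1:(q-1)}=\hat{g}_{1;1:(q-1)}^-$; this is the phenomenon behind Lemma \ref{lem:order} and Corollary \ref{cor:opt}. Applying the induction hypothesis to the head subsample of size $q-1$, together with Lemma \ref{lem:index} to move membership between $I_{1:(q-1)}(\eta)$ and $I_{1:n}(\eta)$, disposes of all $\eta\le\hat{g}_1^-(z_{q-1})$. The remaining boundary range $\hat{g}_1^-(z_{q-1})<\eta\le v_n$, where $\iota^-(\eta)=q$, splits into checking $t_\eta(q)\le t_\eta(\ell)$ for $\ell>q$ (the within-block suffix sums, controlled by $T^-(P_{q:j}^w)\ge v_n$ because $q$ realises the last block) and for $\ell<q$ (head suffix sums, again nonnegative by the induction hypothesis on $\{1,\dots,q-1\}$).

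The role of order sensitivity of the losses in $\mathcal L$ is to legitimise the choice of the lower endpoint $T^-$ at each block: when $T$ is interval-valued the block value is determined only up to $T^\lambda$, and order sensitivity guarantees that moving the index strictly below the value prescribed by $\hat{g}_1^-$ strictly increases $t_\eta$, excluding minimizers to the left of $\iota^-(\eta)$. I expect the main obstacle to be exactly this boundary bookkeeping: the additive sign conditions delivered by the min--max formula pin down pooled sums only for the indices that realise the outer optimizations, so assembling them into $t_\eta(\iota^-(\eta))\le t_\eta(\ell)$ for \emph{every} $\ell$ --- uniformly across the jump separating the last block from the head subsample, and in the interval-valued case --- is where the restriction lemmas and order sensitivity have to be combined carefully.
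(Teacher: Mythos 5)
Your proposal is correct in its architecture but takes a genuinely different route from the paper's proof. The paper expands the criterion in the $x_2$-direction: using Lemma \ref{lem:mixture} (with $H_1=0$, since $L'=0$ in \eqref{eq:ISOREG}) it writes $\tilde L$ as an $H_2$-mixture of the losses $S_{\eta,2}$, and since $-g_2$ is increasing this reduces the claim to domination of all \emph{unweighted} tail sums, $\sum_{\ell=m}^n L(\hat g_1^-(z_\ell),y_\ell)\le\sum_{\ell=m}^n L(\hat g_1(z_\ell),y_\ell)$ for every $m$, which is then proved by a block-by-block case analysis using Corollary \ref{cor:opt}, Lemmas \ref{lem:order} and \ref{lem:index}, and order sensitivity. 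You instead expand in the $x_1$-direction: you keep the weights $h(g_2(z_i))$ in place, decompose $L\in\mathcal L$ itself via \eqref{eq:mix}, and reduce the proposition to the statement that the index function $\iota^-$ of $\hat g_1^-$ satisfies $\iota^-(\eta)\in I_{1:n}(\eta)$ for all $\eta$, i.e.\ that $\hat g_1^-$ solves the \emph{weighted} one-dimensional isotonic regression problem. That statement is precisely the content of the results of \citet{Jordan2019} recalled in Section \ref{sec:solutions} (optimal solutions correspond to increasing selections of minimizing indices), applied to weighted data via their Remark 3.1, so you could close the proof by citation; your induction re-derives it, and its three regimes do go through: for $\eta>v_n$ the sign property of $V$ above $T^-$ suffices; the within-block bound $T^-(P^w_{q:j})\ge v_n$ follows from the max-min form of \eqref{eq:8} at $\ell=q$ together with minimality of $q$ (the maximizing index $i^*$ cannot be smaller than $q$, else $\hat g_1^-(z_{i^*})\ge v_n$); and the decoupling $\restr{\hat g_1^-}{1:(q-1)}=\hat g_{1;1:(q-1)}^-$ follows from the identity $\hat g_1^-(z_\ell)=\min\bigl(\hat g_{1;1:(q-1)}^-(z_\ell),v_n\bigr)$ for $\ell<q$. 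Two imprecisions to repair when writing this up: Lemma \ref{lem:index} transfers minimizers from the full problem to the subproblem, which is the wrong direction for your head regime --- what you actually need is the additive decomposition $t_\eta(\ell)=t_\eta^{\mathrm{head}}(\ell)+t_\eta(q)$ for $\ell\le q$ combined with the within-block sign bound; and order sensitivity plays no role in your argument, since showing that $\iota^-(\eta)$ \emph{is} a minimizer requires no exclusion of other minimizers --- the sign properties of $V$ around $T^-$ and $T^+$ do all the work, also in the interval-valued case. As for what each approach buys: your route handles arbitrary competitors more cleanly (the paper's restriction to fits $\hat g_1$ that jump only at minimizing indices is the most delicate step of its proof) and becomes very short if one cites \citet{Jordan2019}; the paper's route establishes the strictly stronger tail-sum inequality \eqref{eq:wts1_orig}, which is reused in Section \ref{sec:simultaneously} to obtain the simultaneous-optimality criterion of Proposition \ref{prop:criterion}.
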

\begin{proof}
Note that for each $\hat{g}_1$ and we have a partition $\mathcal{Q}$ of the index set such that 
\begin{align*}
	\hat{g}_1(z_i)=\hat{g}_1(z_j)
	\quad 
	\text{for all }
	i,j \in Q, \, Q \in \mathcal{Q}.
\end{align*}
We let $Q_m$ denote the partition element corresponding to $\hat{g}_1$ containing $m$, and $Q_m^-$ denote the partition element corresponding to $\hat{g}_1^-$ containing $m$.

By Lemma \ref{lem:mixture}, it suffices to show that for all $\eta \in \R$
\begin{align*}
	\sum_{i=1}^n S_{\eta,2}(\hat{g}_1^-(z_i),g_2(z_i),y_i) 
	\leq 
	\sum_{i=1}^n S_{\eta,2}(\hat{g}_1(z_i),g_2(z_i),y_i).
\end{align*}
For the latter, it suffices to show that for all $m \leq n$
\begin{align}\label{eq:wts1_orig}
	\sum_{\ell=m}^n L(\hat{g}_1^-(z_\ell), y_\ell)
	\leq \sum_{\ell=m}^n L(\hat{g}_1(z_\ell), y_\ell).
\end{align}
This statement clearly holds if $\hat{g}_1$ has a jump in a some non-minimizing index, that is $\ell$ with $\ell \notin \cup_\eta I_{1:n}(\eta)$.  Thus, we can focus on $\hat{g}_1$ that solely jumps in $\ell$ with $\ell \in \cup_\eta I_{1:n}(\eta)$.  This implies that we have
\begin{align*}
	\sum_{\ell=1}^n L(\hat{g}_1^-(z_\ell), y_\ell)
	= \sum_{\ell=1}^n L(\hat{g}_1(z_\ell), y_\ell).
\end{align*}
In the following, we will prove the converse to \eqref{eq:wts1_orig}, that is, for all $m\leq n$ we have
\begin{align}\label{eq:wts1}
	\sum_{\ell=1}^m L(\hat{g}_1(z_\ell), y_\ell)
	\leq \sum_{\ell=1}^m L(\hat{g}_1^-(z_\ell), y_\ell).
\end{align}
If $m = \max Q_m$, it follows from Corollary \ref{cor:opt} that $\restr{\hat{g}_1}{1:m}$ is optimal on $(z_1, y_1),$ $\dots,$  $(z_m,y_m)$ and therefore \eqref{eq:wts1} holds. 

For $m \neq \max Q_m$, we distinguish two cases.\\
\noindent
\textbf{Case 1:} If $m = \max Q_m^-$, it follows from Lemma \ref{lem:order} and Proposition \ref{prop:fixgi} that
\begin{align*}
	\restr{\hat{g}_1^-}{1:m}
	= \hat{g}_{1 ; 1:m}^-
	\leq \restr{\hat{g}_1}{1:m}
	\leq \restr{\hat{g}_1^+}{1:m}
	\leq \hat{g}_{1; 1:m}^+.
\end{align*}
By Lemma \ref{lem:index} we have $I_{1:n}(\eta) \cap \{1, \dots, m+1\} \subseteq I_{1:m} (\eta)$ for all $\eta \in \R$. Hence, $\restr{\iota}{1:m}(\eta) \in I_{1:m}(\eta)$ for all $\eta \in \R$, where ${\iota}: \R \to \{1, \dots, n+1\}$ is the function imposing the score minimizing-indices corresponding to $\hat{g}_1$. Thus, Proposition 3.5 in \cite{Jordan2019} implies that $\restr{\hat{g}_1}{1:m}$ is a solution to the isotonic regression problem on $(z_1, y_1),\dots, (z_m,y_m)$.

\noindent
\textbf{Case 2:} Consider the case $m \neq \max Q_m$ and let $j=\max\left(\min Q_m,\min Q_m^-\right)$.
It follows from the previous considerations that $\hat{g}_1$ is optimal up to $j-1$ in the sense that it is a minimizer on $(z_1, y_1),\dots, (z_{j-1},y_{j-1})$.
We know that $	\restr{\hat{g}_1^-}{1:m}\leq \hat{g}_{1 ; 1:m}^-$ so if $\restr{\hat{g}_1}{1:m} \geq \hat{g}_{1; 1:m}^-$ we can conclude with the same reasoning as in case 1.

Otherwise, let $j_0 \geq j$ be the minimal index with $\hat{g}_{1;1:m}^-(z_{j_0}) > \restr{\hat{g}_1}{1:m}(z_{j_0})$. Clearly $j_0 \in Q_m$ and hence $\hat{g}_1$ is constant on $\{j,\dots, j_0\}$.  Moreover, if $j_0>j$ then for all $\ell \in \{1, \dots, j_0-1\}$ we have that
\begin{align*}
	\hat{g}_{1; 1:m}^-(z_\ell) 
	=\hat{g}_{1;1:(j_0-1)}^-(z_\ell) 	
	\leq \hat{g}_1(z_\ell)  
	\leq \restr{\hat{g}_{1}^+}{1:(j_0-1)}(z_\ell)
	\leq \hat{g}_{1; 1:(j_0-1)}^+(z_\ell),
\end{align*}
implying that $\hat{g}_1$ is in fact optimal up to $j_0-1$. Of course, if $j_0=j$, we already know that $\hat{g}_1$ is optimal up to $j_0-1$, since we know that $\hat{g}_1$ is optimal up to $j-1$ from our previous considerations. Thus, it remains to check what happens for $\ell \in \{j_0, \dots,m\}$. 

For $\ell \in \{j_0, \dots,m\}$ we have $\hat{g}_1^-(z_\ell) = c^- \leq c=\hat{g}_1(z_\ell)<\hat{g}_{1;1:m}^-(z_\ell)  $ for some constants $c^-$ and $c$.

Denote by $Q_{s;1:m}^-,\dots, Q_{r;1:m}^-$ the partition elements of $\hat{g}_{1;1:m}^-$ on $\{j_0, \dots,m\}$. Then, for $k \in \{s,\dots,r\}$ we have
\begin{align*}
	\sum_{\ell \in Q_{k;1:m}^-} L(\hat{g}_{1;1:m}^-, y_\ell)
	\leq
	\sum_{\ell \in Q_{k;1:m}^-} L(c, y_\ell)
	\leq
	\sum_{\ell \in Q_{k;1:m}^-} L(c^-, y_\ell)
\end{align*}
since $\hat{g}_{1;1:m}^-$ is constant each $Q_{k;1:m}^-$ and $L$ is order-sensitive. Therefore, \eqref{eq:wts1} is fulfilled.
\end{proof}

Finally, we have all necessary results to see that $\hat{g}_1^-$ is indeed our best bet. Define
\begin{align*}
	\hat{g}_1^-(z_\ell; w)
	&:= \min_{j \geq \ell} \max_{i \leq j} T^-(P_{i:j}^w)
	= \max_{i \leq \ell} \min_{j\geq i} T^-(P_{i:j}^w)\\
	\hat{g}_2^-(z_\ell;\hat{g}_1^-) 
	&:= -\min_{j \geq \ell} \max_{i \leq j} -\E(\bar{P}_{i:j}) 
	= -\max_{i \leq \ell} \min_{j\geq i} - \E(\bar{P}_{i:j}),
\end{align*}
where $\bar{P}_{i:j}$ is the empirical distribution of $L(\hat{g}_1^-(z_i),y_i),\dots,L(\hat{g}_1^-(z_j),y_j)$ and  $P^w_{i:j}$ is the weighted empirical distribution of $y_i,\dots,y_j$ with weights $w$.

\begin{proposition}\label{prop:minimizers}
Assume that there exist $\hat{g}_1, \hat{g}_2 \colon \{z_1, \dots, z_n\} \to \R$ minimizing \eqref{eq:ISOREG}, then $\hat{g}_1^-(\cdot; h(\hat{g}_2)), \hat{g}_2^-(\cdot; \hat{g}_1^-(\cdot; h(\hat{g}_2)))$ are also minimizers.
\end{proposition}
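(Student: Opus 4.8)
The plan is to recognize the proposed pair as the output of a single sweep of coordinate-wise optimization started at the assumed global minimizer $(\hat g_1, \hat g_2)$, and to argue that each coordinate update can only weakly decrease the objective \eqref{eq:ISOREG}. Since we begin at the global minimum, the objective value cannot strictly decrease, so every intermediate pair, and in particular the final one, must again attain the minimum. No fixed-point or convergence argument is needed; the whole proof rests on the two one-sided optimality results, Proposition \ref{prop:main} and Proposition \ref{prop:fixgi}(a).

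First I would fix the notation $w := h(\hat g_2)$ for the weights induced by the second component of the given minimizer, so that $\hat g_1^-(\cdot; w)$ is exactly the function $\hat g_1^-(\cdot; h(\hat g_2))$ appearing in the statement. By its min--max representation $\hat g_1^-(\cdot; w)$ is increasing, and $\hat g_2^-(\cdot; \hat g_1^-(\cdot; w))$ is decreasing by definition, so the constructed pair is feasible. The first step is then to replace $\hat g_1$ by $\hat g_1^-(\cdot; w)$ while keeping $g_2 = \hat g_2$ fixed. Because the weights defining $\hat g_1^-(\cdot; w)$ are precisely $w = h(\hat g_2)$, Proposition \ref{prop:main} applies with $g_2 = \hat g_2$ and with the arbitrary increasing function taken to be $\hat g_1$, yielding
\begin{align*}
	\sum_{i=1}^n \tilde{L}\bigl(\hat g_1^-(z_i; w), \hat g_2(z_i), y_i\bigr)
	\leq \sum_{i=1}^n \tilde{L}\bigl(\hat g_1(z_i), \hat g_2(z_i), y_i\bigr).
\end{align*}
As the right-hand side equals the global minimum, equality holds and $(\hat g_1^-(\cdot; w), \hat g_2)$ is itself a minimizer.

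The second step is to optimize the second coordinate with $g_1 = \hat g_1^-(\cdot; w)$ now held fixed. Proposition \ref{prop:fixgi}(a) identifies the optimal antitonic response as exactly $\hat g_2^-(\cdot; \hat g_1^-(\cdot; w))$, giving
\begin{align*}
	\sum_{i=1}^n \tilde{L}\bigl(\hat g_1^-(z_i; w), \hat g_2^-(z_i; \hat g_1^-(\cdot; w)), y_i\bigr)
	\leq \sum_{i=1}^n \tilde{L}\bigl(\hat g_1^-(z_i; w), \hat g_2(z_i), y_i\bigr).
\end{align*}
The right-hand side is the global minimum by the first step, so equality holds once more and the pair $\bigl(\hat g_1^-(\cdot; h(\hat g_2)),\, \hat g_2^-(\cdot; \hat g_1^-(\cdot; h(\hat g_2)))\bigr)$ is a minimizer, which is the claim.

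I expect the only genuinely delicate point to be the bookkeeping of the weights: one must check that updating the second coordinate in the final step does not retroactively change $\hat g_1^-(\cdot; w)$, whose weights were frozen at $w = h(\hat g_2)$ in the first step. This is exactly why the statement pins down $\hat g_1^-(\cdot; h(\hat g_2))$ with the \emph{original} weights rather than re-solving the first coordinate after the $g_2$-update, and it is what lets the argument terminate after a single sweep. A secondary point is the feasibility and finiteness of $\hat g_1^-(\cdot; w)$, which I would justify from the assumed existence of a minimizer together with the min--max representation in \eqref{eq:8}.
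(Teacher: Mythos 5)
Your proof is correct and follows essentially the same route as the paper's: a single coordinate-descent sweep starting from the assumed minimizer, invoking Proposition \ref{prop:main} to replace $\hat g_1$ by $\hat g_1^-(\cdot;h(\hat g_2))$ at no cost, and then Proposition \ref{prop:fixgi}(a) to update the second coordinate. The only cosmetic difference is that the paper concludes the identity $\hat g_2 = \hat g_2^-(\cdot;\hat g_1^-(\cdot;h(\hat g_2)))$ via the explicit optimal antitonic response, whereas you only use the (sufficient) inequality that a best response cannot increase the objective.
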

\begin{proof}
Clearly the pair $\hat{g}_1(\cdot), \hat{g}_2(\cdot)$ has to satisfy the restrictions imposed by Proposition \ref{prop:fixgi} as otherwise they would not be optimal. Proposition \ref{prop:main} implies that the pair $\hat{g}_1^-(\cdot; h(\hat{g}_2)), \hat{g}_2(\cdot)$ is also a minimizing pair to \eqref{eq:ISOREG}. Finally applying part (a) of Proposition \ref{prop:main} we can conclude that $\hat{g}_2(\cdot)= \hat{g}_2^-(\cdot; \hat{g}_1^-(\cdot; h(\hat{g}_2)))$.
\end{proof}

\subsection{Simultaneously optimal solutions}\label{sec:simultaneously}
A simultaneously optimal solution $\hat{g}_1, \hat{g}_2$ has to minimize the expected elementary losses 
\begin{align} \label{eq:elem1}
	\frac{1}{n}\sum_{i=1}^n S_{\eta,1}(g_1(z_i),y_i)
\end{align}
and
\begin{align}\label{eq:elem2}
	\frac{1}{n}\sum_{i=1}^n S_{\eta,2}(g_1(z_i),g_2(z_i),y_i) 
\end{align}
for all $\eta \in \R$ among all increasing functions $g_1: \{z_1, \dots, z_n\} \to \R$ and all decreasing functions $g_2: \{z_1, \dots, z_n\} \to \R$.
The expected elementary score \eqref{eq:elem1} is minimized for all $\eta \in \R$ if and only if $\hat{g}_1$ is an optimal isotonic solution with respect to $T$ characterized in \cite{Jordan2019}. Thus, there can only exist a simultaneously optimal solution if for one such $\hat{g}_1$ there exists $\hat{g}_2: \{z_1, \dots, z_n\} \to \R$ decreasing so that the pair $\hat{g}_1, \hat{g}_2$ minimizes \eqref{eq:elem2} for all $\eta \in \R$.

The proof of Proposition \ref{prop:main} suggests that for any $m \leq n$
\begin{align*}
 \sum_{i=m}^n L(\hat{g}_1^-(z_i), y_i)
 \leq 
 \sum_{i=m}^n L(\hat{g}_1(z_i), y_i) 
\end{align*} 
with equality whenever $m=n$. Note that minimizing \eqref{eq:elem2} for all $\eta \in \R$ is equivalent to minimizing
\begin{align*}
	\sum_{i=1}^n \one\{\eta \leq -g_2(z_i)\} L(g_1(z_i),y).
\end{align*}
Thus, a pair $\hat{g}_1, \hat{g}_2$ can only be simultaneously optimal if $\restr{\hat{g}_1}{m:n}$ is an optimal isotonic solution on $(z_m, y_m), \dots, (z_n,y_n)$ for all $m  \in \{1, \dots, n\}$ with $\hat{g}_2(z_{m-1})>\hat{g}_2(z_{m})$. If this is not the case for some $m \in \{1, \dots, n\}$, we can find $\hat{g}_1^m$ such that the pair $\hat{g}_1^m, \hat{g}_2^m$, where $\hat{g}_2^m$ is the corresponding solution obtained via Proposition \ref{prop:fixgi}, dominates $\hat{g}_1, \hat{g}_2$ for all $\eta \in \R$ with $\hat{g}_2(z_{m-1}) < \eta \leq \hat{g}_2(z_{m-1})$. But inevitably this solution performs worse for other $\eta \in \R$, especially for $\eta \leq -g_2(z_1)$. Figure \ref{fig:2biv} displays a data example where a simultaneously optimal solution does not exist because there exists some index $m$ with $\hat{g}_2(z_{m-1})>\hat{g}_2(z_{m})$ but $\restr{\hat{g}_1^-}{m:n}$ is not an optimal isotonic solution on $(z_m, y_m), \dots, (z_n,y_n)$.
The previous considerations are summarized by the following proposition. 
\begin{proposition}\label{prop:criterion}
A simultaneously optimal solution exists if and only if $\restr{\hat{g}_{1}^-}{m:n}$ is an optimal solution on $(z_m,y_m), \dots, (z_n,y_n)$ for all $m \in \{2, \dots,n\}$ such that $\hat{g}_2^-(z_{m-1}) > \hat{g}_2^-(z_m)$ and $ \hat{g}_1^-(z_{m-1}) = \hat{g}_1^-(z_m)$.
\end{proposition}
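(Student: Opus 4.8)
The plan is to reduce simultaneous optimality to a statement about tail sums of $L$. Any simultaneously optimal solution must minimize the elementary loss \eqref{eq:elem1} for every $\eta$, which forces $g_1$ to be an optimal isotonic solution for $T$ (with weights $h(g_2)$), as recalled from \cite{Jordan2019}. Since $\hat g_1^-$ is the smallest such solution and, by \eqref{eq:wts1_orig}, minimizes every tail sum $\sum_{i=m}^n L(\cdot,y_i)$ among them, I would first argue that pairing it with its own optimal $\hat g_2^-$ (Proposition \ref{prop:fixgi}(a) and Proposition \ref{prop:minimizers}) preserves simultaneous optimality, so that it suffices to characterise when the canonical pair $(\hat g_1^-,\hat g_2^-)$ is simultaneously optimal. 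For the second elementary loss I would write, for fixed $\eta$,
\[
\sum_{i=1}^n S_{\eta,2}(g_1(z_i),g_2(z_i),y_i)=\sum_{i=1}^n\one\{\eta\le -g_2(z_i)\}\bigl(L(g_1(z_i),y_i)+\eta\bigr)-n\,\one\{\eta\le 0\}\eta.
\]
Because $-g_2$ is increasing, $\{i:\eta\le -g_2(z_i)\}$ is an upper set $\{m,\dots,n\}$ whose threshold $m=m(\eta)$ is a jump point of $g_2$; after dropping the final term (independent of $(g_1,g_2)$), minimizing \eqref{eq:elem2} over all pairs amounts to choosing a threshold $m$ and an increasing $g_1$ minimizing $\sum_{i=m}^n L(g_1(z_i),y_i)+(n-m+1)\eta$. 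As the head values $g_1(z_1),\dots,g_1(z_{m-1})$ do not enter, the inner minimum over $g_1$ equals the optimal isotonic value $s(m)$ of $T$ on the subsample $(z_m,y_m),\dots,(z_n,y_n)$, leaving $\min_m\bigl(s(m)+(n-m+1)\eta\bigr)$.

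With this reformulation the necessity direction is short. If $(\hat g_1^-,\hat g_2^-)$ is simultaneously optimal and $m$ is a jump point of $\hat g_2^-$, then for $-\hat g_2^-(z_{m-1})<\eta\le-\hat g_2^-(z_m)$ the threshold equals $m$; comparing with the competitor that keeps $\hat g_2^-$ but replaces $\hat g_1^-$ on $\{z_m,\dots,z_n\}$ by a subsample-optimal tail (extended to an increasing function by small constant head values) shows that $\sum_{i=m}^n L(\hat g_1^-(z_i),y_i)$ must already equal $s(m)$, i.e. $\restr{\hat g_1^-}{m:n}$ is optimal on the subsample; otherwise \eqref{eq:elem2} strictly decreases at those $\eta$. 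When $\hat g_1^-$ additionally jumps at $m$, the tail analogue of Corollary \ref{cor:opt} (obtained by reversing the order of the covariates) makes $\restr{\hat g_1^-}{m:n}$ optimal automatically, so the only binding indices are those with $\hat g_2^-(z_{m-1})>\hat g_2^-(z_m)$ and $\hat g_1^-(z_{m-1})=\hat g_1^-(z_m)$, which is exactly the stated condition.

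For sufficiency I would verify that, under the condition, $(\hat g_1^-,\hat g_2^-)$ minimizes \eqref{eq:elem2} for every $\eta$. By Proposition \ref{prop:fixgi}(a), $\hat g_2^-$ already minimizes \eqref{eq:elem2} among all pairs whose first component is fixed at $\hat g_1^-$, antitonic mean regression being simultaneously optimal; its realized thresholds $m(\eta)$ are the minimizers of $m'\mapsto\sum_{i=m'}^n L(\hat g_1^-(z_i),y_i)+(n-m'+1)\eta$, that is, the pool boundaries of $\hat g_2^-$. The condition, together with the tail analogue of Corollary \ref{cor:opt} at the boundaries where $\hat g_1^-$ jumps, guarantees $\sum_{i=m}^n L(\hat g_1^-(z_i),y_i)=s(m)$ at every boundary $m$, so at its own threshold the canonical pair attains $s(m(\eta))+(n-m(\eta)+1)\eta$, matching the global lower bound $\min_{m'}\bigl(s(m')+(n-m'+1)\eta\bigr)$ provided $m(\eta)$ also minimizes $m'\mapsto s(m')+(n-m'+1)\eta$.

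The crux, and the step I expect to be hardest, is to show that this last proviso holds, i.e. that the threshold selected by $\hat g_2^-$ is a minimizer of $m'\mapsto s(m')+(n-m'+1)\eta$, not merely of the $\hat g_1^-$-objective. A priori a threshold $m^\ast$ that $\hat g_2^-$ avoids — because $\hat g_1^-$ has a large, suboptimal tail sum there — could still yield a smaller value once $g_1$ is re-optimised on that tail. I would rule this out using the pool-adjacent-violators structure of $\hat g_2^-$: between consecutive pool boundaries $\hat g_2^-$ is constant and the realized threshold is locally fixed, while the monotonicity of $m(\eta)$ in $\eta$ reduces the comparison to neighbouring boundaries, at which the condition already forces $\sum_{i=m}^n L(\hat g_1^-(z_i),y_i)=s(m)$. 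Turning this monotone comparison into a clean argument that the argmins of the $\hat g_1^-$-objective and of the true lower-bound objective coincide for every $\eta$ is the technical heart of the proof.
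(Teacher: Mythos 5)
Your route is the same as the paper's: reduce simultaneous optimality to the elementary losses \eqref{eq:elem1} and \eqref{eq:elem2}, use that a decreasing $g_2$ turns \eqref{eq:elem2} into the choice of a tail threshold, obtain necessity by exhibiting a competitor at any jump point of $\hat{g}_2^-$ where the tail restriction of $\hat{g}_1^-$ is suboptimal, and discard joint jump points via a tail analogue of Corollary \ref{cor:opt}. You are in fact more careful than the paper, which declares minimizing \eqref{eq:elem2} for all $\eta$ to be ``equivalent'' to minimizing $\sum_{i}\one\{\eta\le -g_2(z_i)\}L(g_1(z_i),y_i)$, silently dropping the $\eta$-dependent terms that your objective $m'\mapsto s(m')+(n-m'+1)\eta$ correctly retains; your reduction to the canonical pair via \eqref{eq:wts1_orig} and Proposition \ref{prop:fixgi} is also made explicit where the paper leaves it implicit. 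Your necessity direction is complete and is exactly the content of the paper's informal discussion preceding the proposition.

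The step you flag as the ``crux'' of sufficiency is a genuine gap, and it cannot be closed, because the ``if'' direction of the statement fails under the notion of simultaneous optimality used here (minimization among \emph{all} increasing $g_1$ and decreasing $g_2$, as at the start of Section \ref{sec:simultaneously}). The thresholds realized by $\hat{g}_2^-$ minimize $m\mapsto C(m)+(n-m+1)\eta$ with $C(m)=\sum_{i=m}^n L(\hat{g}_1^-(z_i),y_i)$, but the criterion gives no control over the re-optimized tail value $s(m)$ at thresholds that $\hat{g}_2^-$ never realizes, and such a threshold can strictly win. Concretely, take $(\E,\var)$, so $L(x,y)=(x-y)^2$, with $n=2$, $y_1=1>y_2=-1$. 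Then $\hat{g}_1^-\equiv 0$ is the unique optimal isotonic fit, the transformed data are $L(0,y_1)=L(0,y_2)=1$, hence $\hat{g}_2^-\equiv 1$ has no jump, the criterion holds vacuously, and the proposition asserts existence. But at $\eta=-1/2$, any pair whose first component equals $\hat{g}_1^-$ (as minimization of \eqref{eq:elem1} forces, the mean being singleton-valued) gives $n$ times \eqref{eq:elem2} at least $-2\eta=1$, since every summand $\one\{\eta\le-g_2(z_i)\}(1+\eta)$ is nonnegative; whereas the admissible pair $g_1\equiv-1$, $g_2(z_1)=1$, $g_2(z_2)=0$ attains $(L(-1,-1)+\eta)-2\eta=1/2$. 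So no simultaneously optimal solution exists although the criterion is satisfied. Thus the unresolved proviso in your proof is not a technicality you failed to supply: it is precisely where the claimed equivalence breaks, and the paper's own argument---which only establishes the ``only if'' direction and then declares the considerations summarized by the proposition---never addresses it. What is rigorously salvageable is your necessity direction, together with sufficiency under the stronger hypothesis that $\restr{\hat{g}_1^-}{m:n}$ is optimal on $(z_m,y_m),\dots,(z_n,y_n)$ for \emph{every} $m$, since then $C(m)=s(m)$ for all $m$ and the two objectives coincide.
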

Proposition \ref{prop:criterion} supplies us with a criterion to check for simultaneous optimality. 
The approach is to first calculate 
\begin{align*}
	\hat{g}_1^-(z_\ell)&
	:= \min_{j \geq \ell} \max_{i \leq j} T^-(P_{i:j})
	= \max_{i \leq \ell} \min_{j\geq i} T^-(P_{i:j}),\\
	\hat{g}_2^-(z_\ell) 
	&:= -\min_{j \geq \ell} \max_{i \leq j} -\E(\bar{P}_{i:j})
	= -\max_{i \leq \ell} \min_{j\geq i} - \E(\bar{P}_{i:j}),
\end{align*}
with $\bar{P}$ as defined in Proposition \ref{prop:fixgi}.
In a second step, for each $m\geq 2$ with $\hat{g}_2^-(z_{m-1}) > \hat{g}_2^-(z_m)$ and $ \hat{g}_1^-(z_{m-1}) = \hat{g}_1^-(z_m)$ one has check whether $\restr{\hat{g}_1^-}{m:n}$ is an optimal solution on the subset $(z_m, y_m),\dots, (z_n,y_n)$. To check whether $\restr{\hat{g}_1^-}{m:n}$ remains optimal we can compare the expected elementary score for $\restr{\hat{g}_1^-}{m:n}$ to the one of $\hat{g}_{1;m:n}^-$.
If $\restr{\hat{g}_1^-}{m:n}$ remains optimal for each $m\geq 2$ with $ \hat{g}_2^-(z_{m-1}) > \hat{g}_2^-(z_m)$ and $ \hat{g}_1^-(z_{m-1}) = \hat{g}_1^-(z_m)$, then the solution $(\hat{g}_1^-, \hat{g}_2^-)$ is indeed simultaneously optimal.

For bivariate functionals $T$ with two elicitable components there always exists a subclass $\mathcal{L}_2$ of consistent loss functions $L(x_1,x_2,y)$ that are separable in the sense that $L(x_1,x_2,y) = L_1(x_1,y) + L_2(x_1,y)$.
Solving the isotonic regression problem simultaneously over all $L \in \mathcal{L}_2$ can be split into two independent optimization problems. In this case \cite{Jordan2019} provide all necessary tools for a complete characterization of all solutions.
But not all consistent loss functions lie necessarily in $\mathcal{L}_2$. If $T$ is a vector of moments this can be seen in Proposition 4.11 in \cite{Fissler2019}. In the case where $T$ is a vector of quantiles, however, $\mathcal{L}_2$ comprises all consistent losses \citep[Proposition 4.2]{Fissler2016} explaining some of the optimality properties of the IDR introduced by \cite{Henzi2019}. Thus, 
when considering functionals with elicitable components one can reach simultaneous optimality at least with respect to the class $\mathcal{L}_2$. When considering functionals with elicitation complexity greater than one however, there are no separable consistent loss functions, so that possibly no simultaneous optimum exists.

\begin{figure}
\centering
\includegraphics[width=\textwidth]{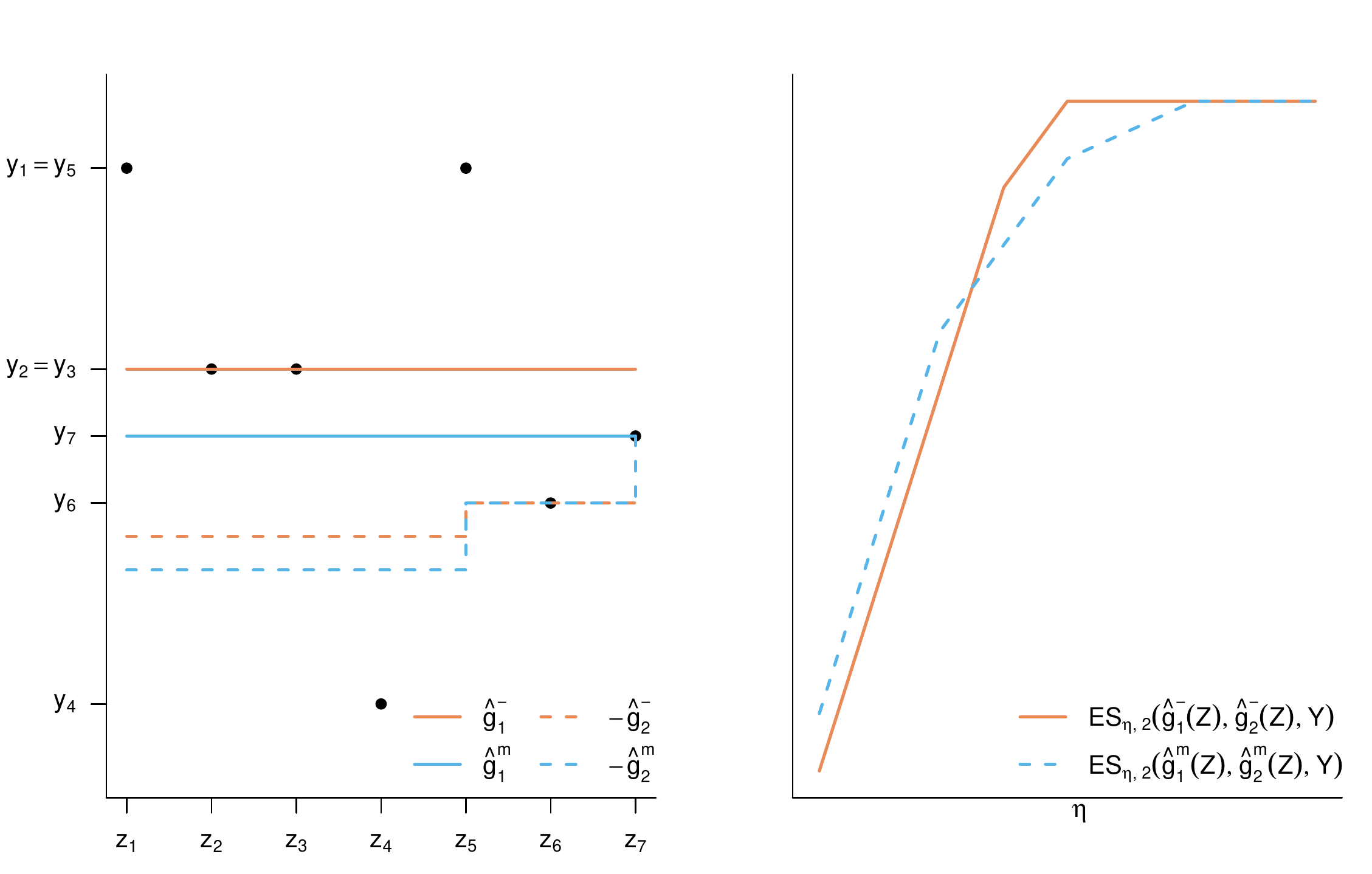}
\caption{Specific sample of seven data points (black) on the left, such that for $\underline{T}=(q_{0.5}, \ES_{0.5})$, $\hat{g}_{1}^- |_{5:7}$ is not an optimal isotonic solution on $(z_5, y_5)$, $(z_6, y_6)$, $(z_7, y_7)$ but $\hat{g}_2^-(z_4)>\hat{g}_2^-(z_5)$. The function $\hat{g}_1^m$ is not an optimal isotonic solution to the global optimization problem. The Murphy diagram \citep{Ehm2016} (plot of expected elementary scores) on the right shows the there are values of $\eta$ where $\hat{g}_1^m$, $\hat{g}_2^m$ has smaller expected loss than $\hat{g}_1^-$, $\hat{g}_2^-$.}
\label{fig:2biv}
\end{figure}

\section{Numerical experiments}\label{sec:simulation}
We let
\begin{align*}
	\begin{split}
	\hat{g}_1^-(z_\ell)&
	:= \min_{j \geq \ell} \max_{i \leq j} T^-(P_{i:j})
	= \max_{i \leq \ell} \min_{j\geq i} T^-(P_{i:j})\\
	\hat{g}_2^-(z_\ell) 
	&:= -\min_{j \geq \ell} \max_{i \leq j} -\E(\bar{P}_{i:j})
	= -\max_{i \leq \ell} \min_{j\geq i} - \E(\bar{P}_{i:j})
	\end{split}
\end{align*}
In this section we investigate on how often simultaneous optimality occurs and the number of iterations needed to obtain an optimal solution for a specific loss function, whenever the solution $\hat{g}_1^-, \hat{g}_2^-$ is not simultaneously optimal.
We consider the two prominent examples $(q_\alpha, \ES)$ and $(\E, \var)$ in the simulations.

First, let us examine what we would expect to result from those simulations in terms of simultaneous optimality.
In Section \ref{sec:solutions}, we saw that simultaneous optimality is attained whenever $\restr{\hat{g}_1^-}{m:n}$ remains an optimal solution for all $m \in \{2, \dots, n\}$ with $\hat{g}_2^-(z_{m-1})>\hat{g}_2^-(z_m)$. 
Clearly, this requirement is fulfilled as long as $\hat{g}_2^-$ jumps at the same point as $\hat{g}_1^-$. 
Naturally, the more jumps $\hat{g}_1^-$ has, or equivalently the less pooling was required, the higher are the chances for simultaneous optimality, in that there are not many additional restrictions left to be imposed by $\hat{g}_2^-$.
Thus, the less the isotonicity constraint is violated in the data the higher the chances for the pair $(\hat{g}_1^-, \hat{g}_2^-)$ to be simultaneously optimal. 
Only considering the impact of $\hat{g}_1^-$, we would expect the chance for simultaneous optimality to decrease with increasing variance in the data. Moreover, for fixed variance we would expect the chance of simultaneous optimality to decrease with increasing sample size, because the chance for necessary pooling increases. 

Concerning the impact of $\hat{g}_2^-$, we have seen in Proposition \ref{prop:fixgi} that $\hat{g}_2^-$ is fitted to the transformed data points $(z_1,L(\hat{g}_1^-(z_1),y_1)), \dots, (z_n,L(\hat{g}_1^-(z_n),y_n))$, where the transformed $y$-values depend on the loss $L$ of $y_\ell$ and $\hat{g}_1^-(z_\ell)$. 
The order sensitivity of the loss function ensures that the transformation $L(\hat{g}_1^-(z_\ell),y_\ell)$ takes larger values when $\hat{g}_1^-(z_\ell)$ and $y_\ell$ are far apart and smaller values when they are close.
Thus, if small modifications are necessary to obtain $\hat{g}_1$, then we would expect the transformed data to be approximately constant. The outcome of the transformation however depends on how the loss $L$ weighs the differences. 

The setup for the simulations was the following:  
For the pair $(q_\alpha, \ES_\alpha)$ we aimed to optimally fit an increasing function $\hat{g}_1^-$ and decreasing function $\hat{g}_2^-$ to simulated data sets. We drew $n$ points $z_\ell$ independently and uniformly from $[0, 100]$. The corresponding $y$-value was $y_\ell=z_\ell + \epsilon_\ell$ where $\epsilon_\ell \sim \mathcal{N}(0, \sigma^2)$ are independent and independent of $z_\ell$. We let $n \in \{10,100,500,1000\}$ and $\sigma \in \{3,10,20,30\}$ and we repeated the experiment $M=1000$ times to count the number of times simultaneous optimality occurred. 
To investigate whether the results differ depending on level $\alpha$, we calculated $\hat{g}_1^-$ and $\hat{g}_2^-$ for each data set for all $\alpha \in \{0.1,0.2,0.3,0.4,0.5,0.6,0.7,0.8,0.9\}$. For a specific data set, Figure \ref{fig:7pav} shows the fits $\hat{g}_1^-$ for all levels $\alpha$, and Figure \ref{fig:8pav} contains the corresponding fits $-\hat{g}_2^-$. 

For the pair $(\E, \var)$ we aimed to optimally fit two increasing functions $\hat{g}_1^-$ and $\hat{g}_2^-$ to simulated data sets. All our results can clearly be adapted to this case. Thus, again drew $n$ points $z$ independently and uniformly from $[0, 100]$. The corresponding $y$-value was $y_\ell= z_\ell + \epsilon_\ell$ where $\epsilon_\ell \sim \mathcal{N}(0, c {\ell}/{\sqrt{n}})$ were independent. We let $n \in \{10,100,500,1000\}$ and $c \in \{0.5,1,3,6\}$ and then generated $M=1000$ data sets and calculated the corresponding fits $\hat{g}_1^-$ and $\hat{g}_2^-$. 
Figure \ref{fig:9biv} contains the fits $\hat{g}_1^-$ and $\hat{g}_2^-$ for a specific data set.

Using the criterion in Proposition \ref{prop:criterion}, we counted the number times simultaneous optimality occurred. Table \ref{table:2} contains the results for the pair $(q_\alpha, \ES_\alpha)$. The percentage of times simultaneous optimality is reached is displayed. The results confirm our expectations.
With increasing sample size and increasing variance the percentage decreases drastically. 
The reason that not all levels $\alpha$ are equally affected is due to the different weights that $L$ imposes depending on the level $\alpha$.

The results for the pair $(\E, \var)$ in Table \ref{table:1} also confirm our expectations. The reason why the percentage in this case decreases even more rapidly is that the expectation $\E$ is less robust when it comes to removing data from a partition element than the quantile $q_\alpha$ is.


\begin{figure}[ht] 
\centering
\includegraphics[width=.9\textwidth]{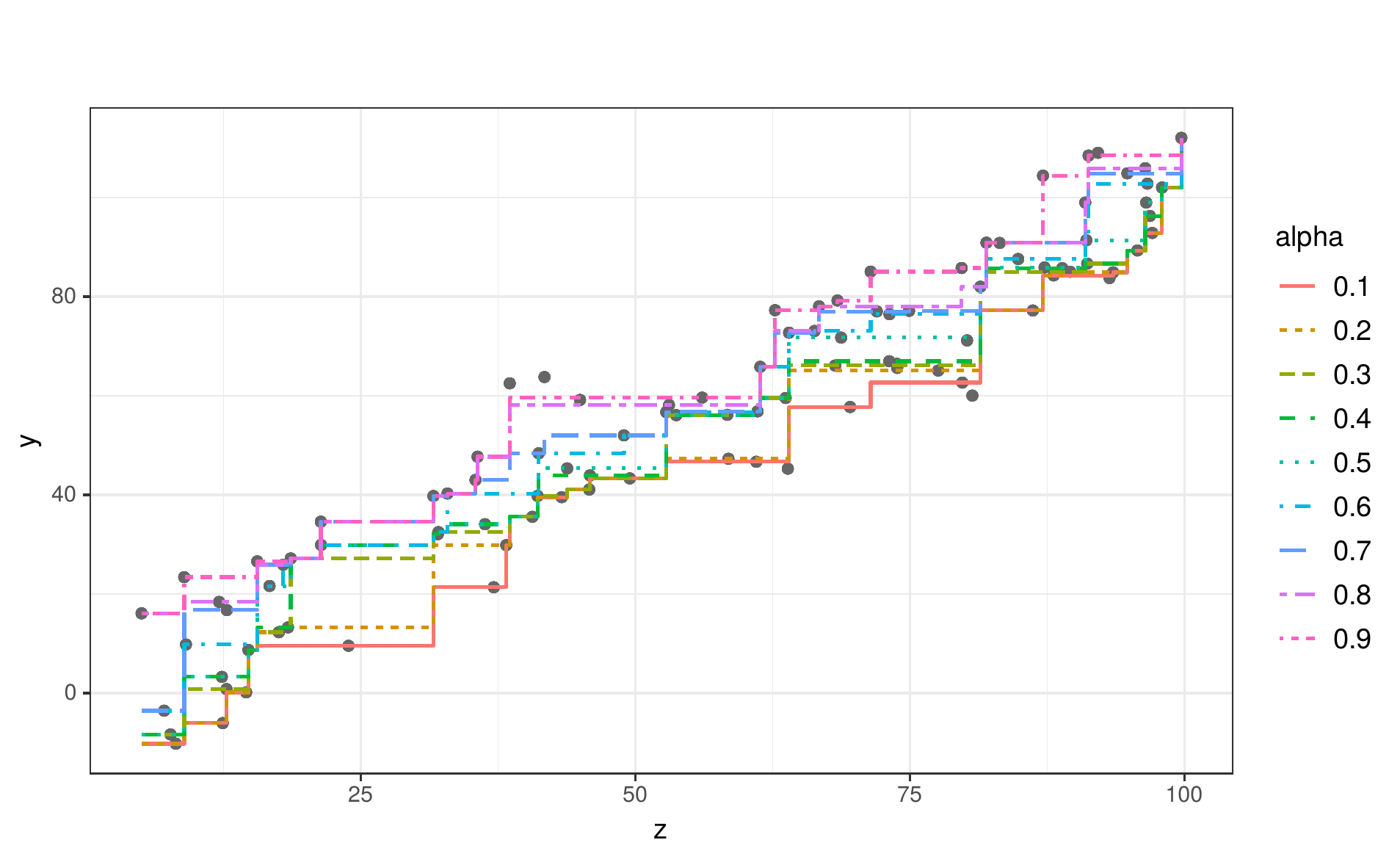}
\caption{For a set of $n=100$ data points and the pair $(q_\alpha, \ES_\alpha)$ the optimal fit $\hat{g}_1^-$ was drawn for each $\alpha \in \{0.1,0.2,0.3,0.4,0.5,0.6,0.7,0.8,0.9\}$.}
\label{fig:7pav}
 \vspace*{\floatsep}
\includegraphics[width=.9\textwidth]{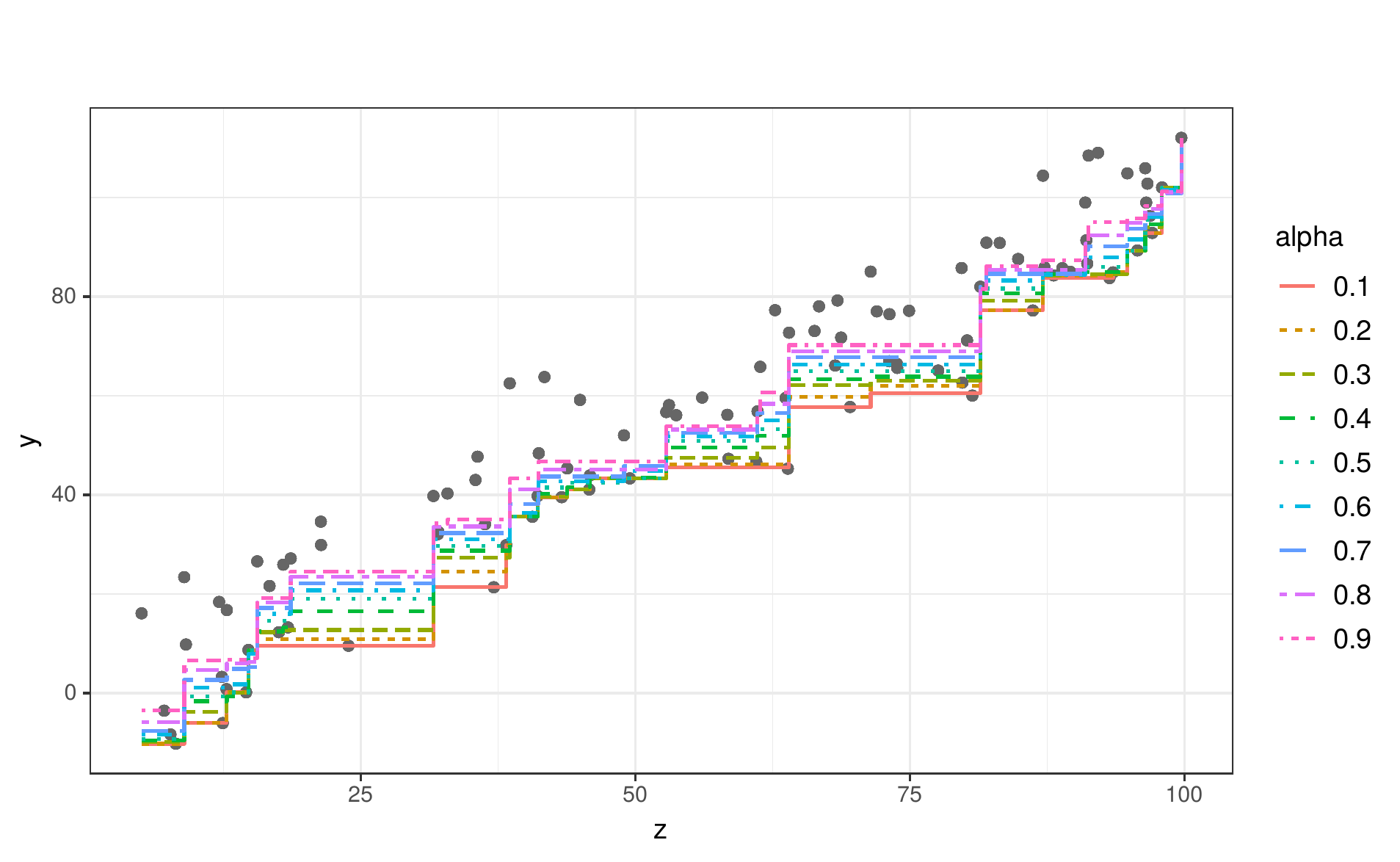}
\caption{For the same choice of $n=100$ data points as in Figure \ref{fig:7pav} the corresponding fits $\hat{g}_2^-$ are calculated and $-\hat{g}_2^-$ is displayed for each $\alpha \in \{0.1,0.2,0.3,0.4,0.5,0.6,0.7,0.8,0.9\}$.}
\label{fig:8pav}
\end{figure}


\begin{figure}[ht] 
\centering
\includegraphics[width=.9\textwidth]{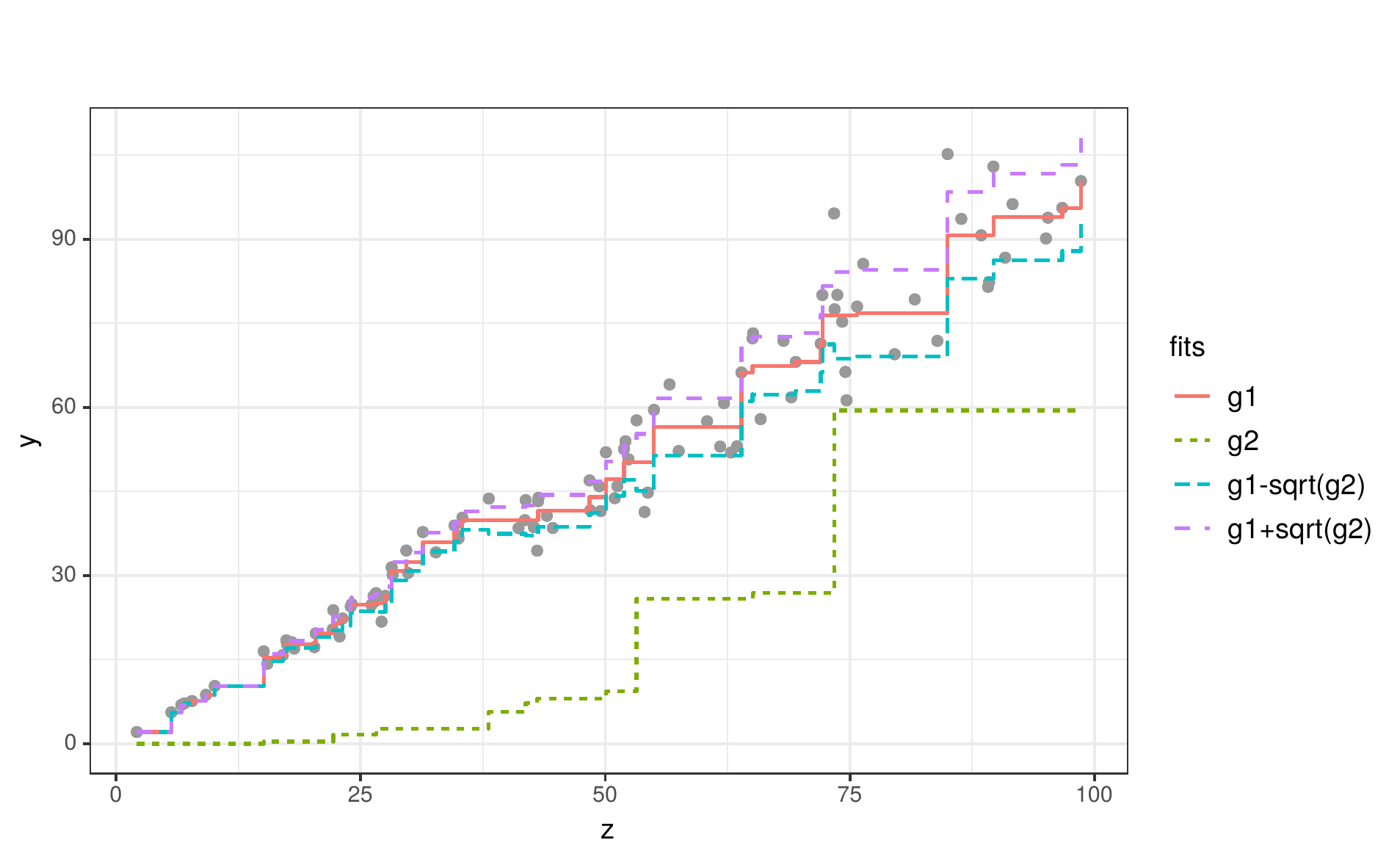}
\caption{For a sample of $n=100$ data points and the pair $(\E, \var)$ the optimal fit $\hat{g}_1$ is drawn in red and ${\hat{g}_2}$ is in green. Moreover, $\hat{g}_1-\sqrt{\hat{g}_2}$ and $\hat{g}_1+\sqrt{\hat{g}_2}$ are drawn in blue and pink, respectively}
\label{fig:9biv}
\end{figure}

\begin{landscape}
\begin{table}[p] 
\caption{Percentage of times simultaneous optimality occurred for $(q_\alpha, \ES_\alpha)$ for each combination of sample size $n$, standard deviation $\sigma$, and level $\alpha$.}  
\label{table:2}
\vspace{0.1cm}
\centering 
\begin{tabular}{ll|rrrrrrrrr} 
\hline 
 & & $\alpha=0.1$ & $\alpha=0.2$ & $\alpha=0.3$ & $\alpha=0.4$ & $\alpha=0.5$ & $\alpha=0.6$ & $\alpha=0.7$ & $\alpha=0.8$ & $\alpha=0.9$ \\ 
\hline
$n=10$& $\sigma=3$ & 1.00 & 1.00 & 1.00 & 1.00 & 1.00 & 1.00 & $0.93$ & $0.94$ & $0.94$ \\ 
& $\sigma=10$ &  1.00 & 1.00 & 1.00 & 1.00 & $0.98$ & $0.96$ & $0.79$ & $0.70$ & $0.69$ \\ 
& $\sigma=20$ & 1.00 & 1.00 & 1.00 & $0.98$ & $0.94$ & $0.92$ & $0.69$ & $0.58$ & $0.50$ \\ 
& $\sigma=30$ &  1.00 & 1.00 & $0.98$ & $0.97$ & $0.88$ & $0.88$ & $0.64$ & $0.53$ & $0.44$ \\  \hline
$n=100$ & $\sigma=3$ & 1.00 & $0.97$ & $0.60$ & $0.48$ & $0.14$ & $0.13$ & 0.00 & 0.00 & 0.00 \\ 
& $\sigma=10$ &$0.96$ & $0.53$ & $0.16$ & $0.08$ & $0.01$ & $0.01$ & 0.00 & 0.00 & 0.00 \\ 
& $\sigma=20$ & $0.80$ & $0.31$ & $0.11$ & $0.06$ & $0.01$ & $0.02$ & 0.00 & 0.00 & 0.00 \\ 
& $\sigma=30$ & $0.70$ & $0.27$ & $0.12$ & $0.06$ & $0.02$ & $0.02$ & 0.00 & 0.00 & 0.00 \\  \hline
$n=500$ & $\sigma=3$ & $0.47$ & 0.00 & 0.00 & 0.00 & 0.00 & 0.00 & 0.00 & 0.00 & 0.00 \\ 
& $\sigma=10$ & $0.06$ & 0.00 & 0.00 & 0.00 & 0.00 & 0.00 & 0.00 & 0.00 & 0.00 \\ 
& $\sigma=20$ & $0.03$ & 0.00 & 0.00 & 0.00 & 0.00 & 0.00 & 0.00 & 0.00 & 0.00 \\  
& $\sigma=30$ &$0.04$ & 0.00 & 0.00 & 0.00 & 0.00 & 0.00 & 0.00 & 0.00 & 0.00 \\  \hline
$n=1000$ & $\sigma=3$ & $0.01$ & 0.00 & 0.00 & 0.00 & 0.00 & 0.00 & 0.00 & 0.00 & 0.00 \\ 
& $\sigma=10$ & 0.00 & 0.00 & 0.00 & 0.00 & 0.00 & 0.00 & 0.00 & 0.00 & 0.00 \\ 
& $\sigma=20$ & 0.00 & 0.00 & 0.00 & 0.00 & 0.00 & 0.00 & 0.00 & 0.00 & 0.00 \\ 
& $\sigma=30$ &  0.00 & 0.00 & 0.00 & 0.00 & 0.00 & 0.00 & 0.00 & 0.00 & 0.00 \\ 
\hline \\[-1.8ex] 
\end{tabular}
\end{table} 
\end{landscape}

\begin{table}[ht!]
  \caption{Percentage of times simultaneous optimality occurred for $(\E, \var)$ for each combination of sample size $n$ and constant $c$.} 
\label{table:1}
\vspace{0.1cm}
 \centering
\begin{tabular}{l|rrrr} 
\hline 
 & $c=0.5$ & $c=1$ & $c=3$ & $c=6$ \\ 
\hline 
$n=10$ & $0.98$ & $0.95$ & $0.79$ & $0.57$ \\ 
$n=100$ & $0.02$ & $0.00$ & $0.00$ & $0.00$ \\ 
$n=500$ & $0.00$ & $0.00$ & $0.00$ & $0.00$ \\ 
$n=1000$ & $0.00$ & $0.00$ & $0.00$ & $0.00$ \\ 
\hline
\end{tabular}  
\end{table}

Simultaneous optimality is usually not attainable. In these cases, we have to choose a specific loss function to solve the isotonic regression problem. It is natural to ask, how different these solutions are compared to our candidate for simultaneous optimality. 

For both examples, we choose two different functions $h$ and count the number of iterations the algorithm needed to get from the candidate for simultaneous optimality to a potential optimal solution for the specific loss. For the pair $(q_\alpha, \ES_\alpha)$, we considered the (1/2)-homogeneous loss from \cite{Nolde2017}. It arises when choosing $h$ in \eqref{eq:loss} as $h_1(x)=1/(2 \sqrt{x})$. We also considered $h_2(x)=\exp(-x)$. The iteration was stopped when the loss given by \eqref{eq:ISOREG} did not improve by more that $10^{-10}$. For both loss functions, almost no adjustments were necessary with a maximum average number of iterations for $h_1$ of $0.11$ when $\sigma = 30$ and $\alpha=0.3$, and for $h_2$ of $0.05$ when $\sigma=30$ and $\alpha=0.1$. For most combinations of $\sigma$ and $\alpha$, the average number of iterations was zero for both loss functions which is why detailed results are not displayed. This suggests that although, the candidate for simultaneous optimality is not simultaneously optimal, it still is optimal with respect to several losses.

For the pair $(\E, \var)$, we chose functions $h_1(x)=1/(x+0.1)$ and $h_2(x)=\exp(-x/50+0.1)$. The reason for dividing by $50$ was the scale of the weights to avoid numerical issues. The summand $+0.1$ was to avoid weights of zero. Again, the iteration was stopped when the loss given by \eqref{eq:ISOREG} did not improve by more that $10^{-10}$. Figure \ref{fig:iterated} displays the corresponding solutions obtained for a specific data set. The average number of iterations is displayed in Table \ref{table:4}. Here, the situation is different. Given a specific loss function for the pair $(\E,\var)$, the global loss may decrease through adaptations of the optimal solution for $\E$ alone.

\begin{table}[ht!]
\caption{The average number of iterations are displayed for the two weight functions $h_1, h_2$ considered for the pair $(\E,\var)$.} 
\label{table:4} 
\vspace{0.1cm}
\centering 
\begin{tabular}{ll|rrrr} 
\hline 
 && $c=0.5$ & $c=1$ & $c=3$ & $c=6$ \\ 
\hline 
$n=10$ & $h_1$ &$0.07$ & $0.24$ & $1.15$ & $2.77$ \\ 
 & $h_2$ &$0.01$ & $0.06$ & $0.94$ & $2.79$ \\  \rowcolor{gray!25}
$n=100$ & $h_1$ & $10.52$ & $12.48$ & $13.95$ & $14.45$ \\ \rowcolor{gray!25}
& $h_2$ & $2.90$ & $6.52$ & $13.12$ & $12.73$ \\  
$n=500$ & $h_1$ & $10.54$ & $11.76$ & $13.77$ & $13.58$ \\  
& $h_2$ & $6.68$ & $11.05$ & $14.40$ & $8.78$ \\ \rowcolor{gray!25}
$n=1000$ & $h_1$ & $9.16$ & $10.05$ & $11.37$ & $12.58$  \\ \rowcolor{gray!25}
& $h_2$ & $7.91$ & $12.04$ & $12.19$ & $3.32$ \\ 
\hline
\end{tabular} 
\end{table} 

\begin{figure}[H]
\includegraphics[width=\textwidth]{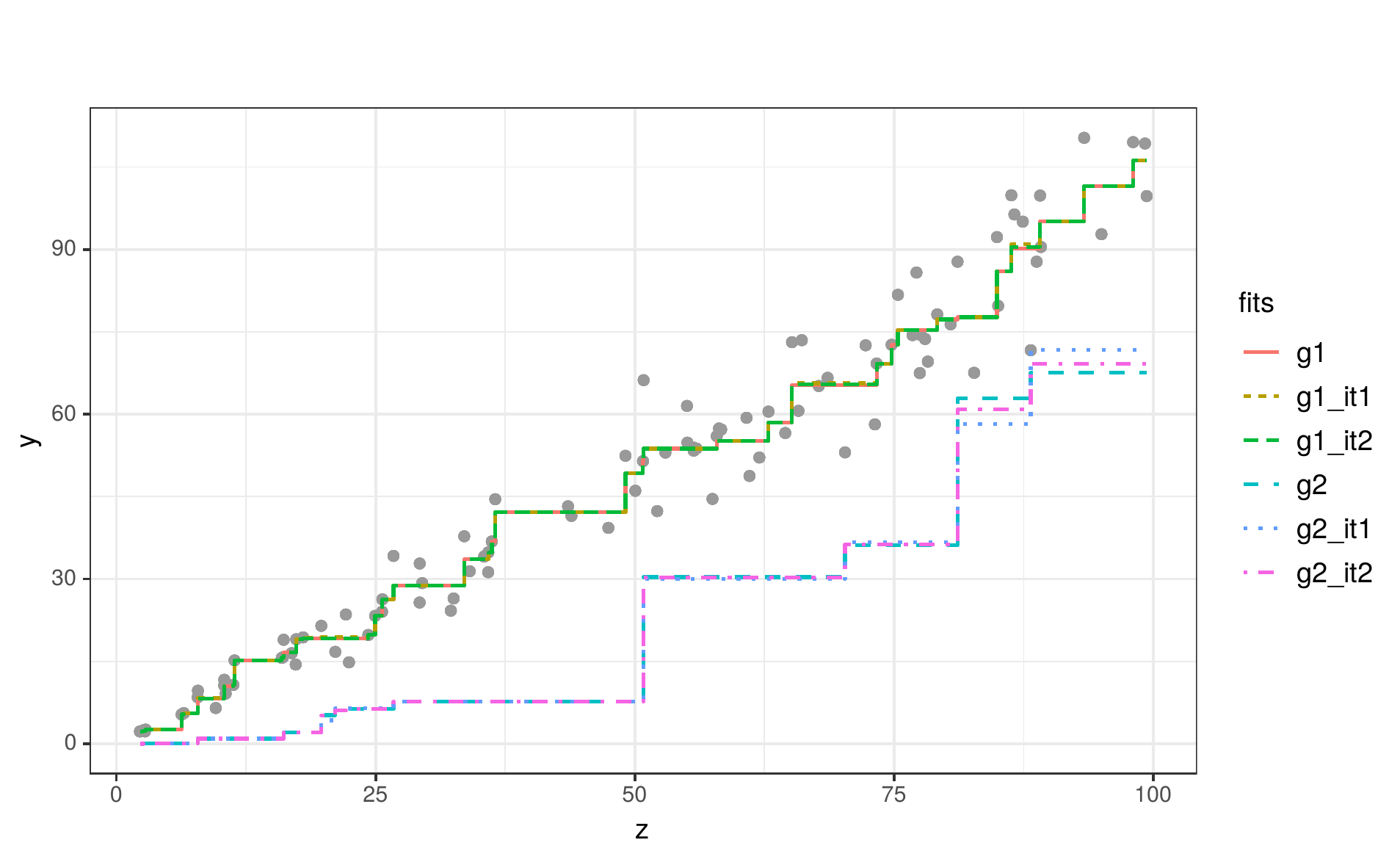}
\caption{For a specific sample of size $100$ the original fits (g1 and g2) are displayed in red and light blue respectively. The light green and dark blue fits (g1$\_$it1 and g2$\_$it1) correspond to the iterated versions of $g_1$ and $g_2$, respectively, with respect to weight function $h_1$. Finally, the dark green  and the pink fits (g1$\_$it2 and g2$\_$it2) correspond to  the iterated versions of $g_1$ and $g_2$, respectively, with respect to $h_2$. For $h_1$ the number of iterations was $13$ and for $h_2$ a total of $5$ iterations were necessary.}
\label{fig:iterated}
\end{figure}

\bibliographystyle{abbrvnat}
\bibliography{biblio}

\appendix
\section{Generalizations to partial orders}
The results in this article can be generalized to partially ordered covariate sets.
Let distribution $P$ be the distribution of the random vector $(Z, Y) \in \mathcal{Z} \times \R$, where $\mathcal{Z}$ is a finite partially ordered set. We denote the partial order by $\preceq$.
We aim now to minimize the criterion
\begin{multline}\label{eq:ISOREG2}
\int \tilde{L}(g_1(z),g_2(z),y) \, P(\diff z, \diff y)
\\= \int \Big(H(g_2(z)) + h(g_2(z))\big(L(g_1(z),y) - g_2(z)\big)\Big) \, P(\diff z, \diff y)
\end{multline}
among all increasing functions $g_1\colon \mathcal{Z} \to \R$ and decreasing $g_2\colon \mathcal{Z} \to \R$, that is, for $z \preceq z'$ we have $g_1(z) \leq g_1(z')$ and $g_2(z) \geq g_2(z')$.
We call any minimizing pair a solution to the isotonic regression problem.
Following \cite{Jordan2019}, in order to accommodate the partially ordered set $\mathcal{Z}$, we introduce upper sets $x \subseteq \mathcal{Z}$ to replace single indices $i \in \{1, \dots, n + 1\}$. Set $x$ is said to be an \emph{upper set} if $z\in x$ and $z \preceq z'$ implies $z' \in x$. We denote $P_x(A)=P((x \times \R) \cap A)$ for any $A \in \mathcal{P}(\mathcal{Z}) \otimes \mathcal{B}(\R)$, where $\mathcal{B}(\R)$ denotes the Borel $\sigma$-algebra on $\R$. Let $\mathcal{X}$ consist of all admissible superlevel sets for an increasing function $g$ imposed by the partial order on $\mathcal{Z}$.

As in the case of total orders, keeping either $g_1$ or $g_2$ fixed, we can find the optimal solution to \eqref{eq:ISOREG2} with respect to the other component.

\begin{proposition}\label{prop:fixed2}
\begin{enumerate}
\item[(a)] Let $g_1:\mathcal{Z}\to \R$ be given. Then, the optimal antitonic solution $\hat{g}_2$ of \eqref{eq:ISOREG} corresponding to $\hat{g}_1$ is given by
\begin{align*}
		\hat{g}_2(z) 
	&= -\min_{x':z\notin x'} \max_{x \supsetneq x'} -\E(\bar{P}_{x \setminus x'})
	= -\max_{x: z \in x} \min_{x' \subsetneq x} - \E(\bar{P}_{x \setminus x'}),
\end{align*}
where $\bar{P}_{i:j}$ is the empirical distribution of $L(g_1(z_i),y_i),\dots,L(g_1(z_j),y_j)$.
\item[(b)] Let $g_2: \mathcal{Z} \to \mathbb{R}$ be given. Then, any optimal isotonic solution $\hat{g}_1$ of \eqref{eq:ISOREG} with $g_2$ fixed satisfies
\[
\min_{x':z\notin x'} \max_{x \supsetneq x'}  T^-({P}_{x \setminus x'}^w) 
\leq \hat{g}_1(z) 
\leq \max_{x: z \in x} \min_{x' \subsetneq x} T^+(P^w_{x \setminus x'}),
\]
where $P^w_{x \setminus x'}$ is the weighted empirical distribution of $y$ with $z \in x \setminus x'$ and weights proportional to $h(g_2(z)),z \in \mathcal{Z}$.
\end{enumerate}
\end{proposition}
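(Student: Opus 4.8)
The plan is to mirror the proof of Proposition \ref{prop:fixgi} essentially verbatim, replacing the sequential $\min$-$\max$ over single indices by the corresponding $\min$-$\max$ over upper sets and invoking the partial-order versions of the results of \cite{Jordan2019} in place of their totally ordered counterparts. The two structural ingredients used in the totally ordered case are (i) the Bregman form of the objective in $g_2$ for fixed $g_1$, and (ii) the reduction of the objective in $g_1$ for fixed $g_2$ to a weighted one-dimensional isotonic regression problem; neither depends on the order being total, so the argument carries over once the correct partial-order solution formulas are substituted.

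For part (a), I would first observe that for fixed $g_1$ the integrand in \eqref{eq:ISOREG2}, viewed as a function of $g_2(z)$ against the transformed response $L(g_1(z),y)$, is a Bregman loss: matching $H(x_2)+h(x_2)(\ell-x_2)$ to $-\phi(x_2)-\phi'(x_2)(\ell-x_2)$ forces $\phi=-H$, which is convex precisely because $h=H'$ is decreasing. Since $\hat{g}_2$ is antitonic exactly when $-\hat{g}_2$ is isotonic, minimizing over antitonic $g_2$ is equivalent to solving the partial-order isotonic regression problem for $-\hat{g}_2$ fitted to the points $L(g_1(z),y)$. Because the mean $\E$ is elicited by every Bregman loss, the partial-order $\min$-$\max$ characterization of \cite{Jordan2019} applied to $-\hat{g}_2$ yields the stated formula with $-\E(\bar{P}_{x\setminus x'})$, the sign flips accounting for the passage from $-\hat{g}_2$ back to $\hat{g}_2$.

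For part (b), I would note that for fixed $g_2$ the terms $H(g_2(z))$ and $-h(g_2(z))\,g_2(z)$ do not involve $g_1$, so minimizing \eqref{eq:ISOREG2} over isotonic $g_1$ is equivalent to minimizing $\int h(g_2(z))\,L(g_1(z),y)\,P(\diff z,\diff y)$. The positive factors $h(g_2(z))$ are then absorbed into the weighted distributions $P^w_{x\setminus x'}$, exactly as weights are handled in Remark 3.1 of \cite{Jordan2019}. The partial-order analog of their Proposition 3.6 then bounds every optimal isotonic $\hat{g}_1$ between the lower $\min$-$\max$ in $T^-$ and the upper $\min$-$\max$ in $T^+$ over upper sets, which is the asserted two-sided inequality.

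The main obstacle I anticipate is purely at the level of correctly setting up the upper-set $\min$-$\max$ representation: one must check that the admissible superlevel sets $\mathcal{X}$ for an increasing function on $\mathcal{Z}$ play the same role that the threshold sets $\{\ell,\dots,n\}$ played in the totally ordered case, so that the partial-order solution formulas of \cite{Jordan2019} apply to the transformed and weighted empirical distributions $\bar{P}_{x\setminus x'}$ and $P^w_{x\setminus x'}$ indexed by differences of upper sets. For interval-valued $T$ the solution in part (b) is again non-unique, which is precisely why only two-sided bounds are obtained; confirming that the extremal solutions are realized by the $T^-$ and $T^+$ expressions is the one place where the partial-order bookkeeping must be carried out with care.
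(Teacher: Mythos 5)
Your proposal is correct and takes essentially the same approach as the paper: the paper's own proof of Proposition \ref{prop:fixed2} consists of the single remark that it ``follows with the same argument as for total orders,'' i.e., the proof of Proposition \ref{prop:fixgi} transferred to upper sets. The details you supply—the Bregman-loss identification with $\phi=-H$ (convex since $h$ is decreasing) for part (a), and the absorption of the weights $h(g_2(z))$ into $P^w_{x\setminus x'}$ followed by the partial-order analogue of Proposition 3.6 of \cite{Jordan2019} for part (b)—are exactly the ingredients that argument relies on.
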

\begin{proof}
Follows with the same argument as for total orders.
\end{proof}

As in Section \ref{sec:solutions}, we need to introduce some notation for the investigations ahead. In the following, we denote an optimal solution on the subset $\bar{x} \subseteq \mathcal{Z}$ by $\hat{g}_{1;\bar{x}}$ and by $\restr{\hat{g}_1}{\bar{x}}$ we denote the optimal solution on the original set redistricted to $\bar{x}$.

Thinking in terms of superlevel sets, Lemma \ref{lem:order} states that $\restr{\hat{g}_1}{\mathcal{Z}\setminus\bar{x}} \leq \hat{g}_{1;\mathcal{Z}\setminus\bar{x}}$ for any $\bar{x} \in \mathcal{X}$.
Equivalently, $\restr{\hat{g}_1}{\bar{x}} \geq \hat{g}_{1;\bar{x}}$.
\begin{lemma}\label{lem:order2} Let $\bar{x} \in \mathcal{X}$ and assume that
\begin{align*}
	\hat{g}_1(z)&
	:= \min_{x':z\notin x'} \max_{x \supsetneq x'}  
	T^\lambda({P}_{x \setminus x'}^w) 
	= \max_{x: z \in x} \min_{x' \subsetneq x} 
	T^\lambda(P^w_{x \setminus x'})
\end{align*}
for some $\lambda \in [0,1]$. Then we have $\restr{\hat{g}_1}{\bar{x}} \geq \hat{g}_{1;\bar{x}}$.
\end{lemma}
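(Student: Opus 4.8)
The plan is to mirror the one-line argument used for Lemma \ref{lem:order} in the totally ordered case, now working with the max-min half of the representation,
\[
\hat{g}_1(z) = \max_{x: z \in x} \ \min_{x' \subsetneq x} T^\lambda(P^w_{x \setminus x'}),
\]
where $x, x'$ range over admissible superlevel sets in $\mathcal{X}$. The guiding idea is that passing from the full problem on $\mathcal{Z}$ to the restricted problem on $\bar{x}$ merely shrinks the family of superlevel sets over which the outer maximum is taken, so the restricted maximum can only be smaller, which is precisely the desired inequality $\restr{\hat{g}_1}{\bar{x}} \geq \hat{g}_{1;\bar{x}}$.

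First I would pin down the admissible superlevel sets of the sub-poset $\bar{x}$. Since $\bar{x} \in \mathcal{X}$ is itself an upper set, a set $u \subseteq \bar{x}$ is an upper set of $\bar{x}$ if and only if it is an upper set of $\mathcal{Z}$ contained in $\bar{x}$: the closure condition ($z \in u$, $z \preceq z'$ imply $z' \in u$) needs no separate verification that $z' \in \bar{x}$, because $\bar{x}$ is already upper. Consequently the min-max representation of the solution $\hat{g}_{1;\bar{x}}$ computed on the subset reads
\[
\hat{g}_{1;\bar{x}}(z) = \max_{x \ni z,\, x \subseteq \bar{x}} \ \min_{x' \subsetneq x,\, x' \subseteq \bar{x}} T^\lambda(P^w_{x \setminus x'}),
\]
with $x, x'$ again upper sets of $\mathcal{Z}$.

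The key step is to observe that for any admissible $x \subseteq \bar{x}$ containing $z$, the inner minimization is identical in the two problems: every $x' \subsetneq x$ automatically satisfies $x' \subseteq x \subseteq \bar{x}$, so the constraint $x' \subseteq \bar{x}$ is vacuous, and the weighted empirical distribution $P^w_{x \setminus x'}$ (formed from the data points indexed by $x \setminus x' \subseteq \bar{x}$, with the same weights proportional to $h(g_2(\cdot))$) is literally the same object in both problems. Hence the two inner minima agree, and the outer maximum defining $\hat{g}_{1;\bar{x}}(z)$ runs over a sub-collection of the upper sets appearing in $\hat{g}_1(z)$. Taking a maximum over fewer sets can only decrease the value, so $\hat{g}_{1;\bar{x}}(z) \leq \hat{g}_1(z) = \restr{\hat{g}_1}{\bar{x}}(z)$, which is the claim.

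The only genuine obstacle here is bookkeeping rather than analysis: one must verify that the admissible superlevel sets $\mathcal{X}$ used in the representation restrict correctly to the sub-poset $\bar{x}$, and that the weighted empirical measures are unchanged under restriction. Both facts hinge on $\bar{x}$ being an upper set, i.e.\ $\bar{x} \in \mathcal{X}$, which is exactly the hypothesis of the lemma; for an arbitrary subset $\bar{x}$ the identification of upper sets would fail and the argument would break down. Once these identifications are in place the inequality is immediate, and no order-sensitivity or further optimization argument beyond the min-max representation itself is required.
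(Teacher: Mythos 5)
Your proof is correct and takes essentially the same route as the paper: both use the max-min half of the representation and observe that $\hat{g}_{1;\bar{x}}(z)$ is a maximum over the sub-collection of admissible upper sets contained in $\bar{x}$, hence bounded above by $\restr{\hat{g}_1}{\bar{x}}(z)$. Your additional bookkeeping (identifying upper sets of the sub-poset with upper sets of $\mathcal{Z}$ contained in $\bar{x}$, and noting the inner minimization and weighted measures are unchanged) only makes explicit what the paper's one-line proof leaves implicit.
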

\begin{proof}
It suffices to notice that
\begin{equation*}
	\restr{\hat{g}_1}{\bar{x}} (z)
	= \max_{x: z \in x} \min_{x' \subsetneq x} 
	T^\lambda(P^w_{x \setminus x'})
	\geq
	\mathop{\max_{x \in \mathcal{X};\, x  \subseteq  \bar{x} ;}}_{ z \in x} 
	\min_{x' \in \mathcal{X};\, x' \subsetneq x} 
	T^\lambda(P^w_{x \setminus x'})
	= \hat{g}_{1;\bar{x}}(z).\qedhere
\end{equation*}
\end{proof}

Let us recall the following observations made in \cite{Jordan2019}. For fixed weights $w$, we can minimize 
\begin{align}\label{eq:index2}
	\int_{x \times \R} V(\eta,y) \, P^w(\diff y),
	\quad \text{for all }
	\eta \in \R
\end{align}
among all admissible superlevel sets $x$ for an increasing function $g_1: \mathcal{Z} \to \R$ to obtain an optimal solution to \eqref{eq:ISOREG2}. The search for the optimal superlevel set $x$ needs to be conducted for every $\eta \in \R$. 
 Again there is a one-to-one correspondence between admissible superlevel sets and optimal solutions. Instead of an increasing function $\iota: \R \to \{1, \dots, n+1\}$ with $\iota(\eta) \in I_{1:n}(\eta)$ for all $\eta$, we now have a decreasing function $\xi: \R \to \mathcal{Z}$, in the sense that $\xi(\eta')\subseteq \xi(\eta)$ for $\eta' > \eta$. Moreover, it should hold that $\xi(\eta) \in X_{\mathcal{Z}}(\eta)$ for all $\eta \in \R$, where $X_{\mathcal{Z}}(\eta)\subseteq \mathcal{X}$ denotes the set of all superlevel sets minimizing \eqref{eq:index2}. Then the correspondence between an optimal solution $\hat{g}_1$ and $\xi(\eta)$ is given by
 \begin{align*}
 	\inf\{\eta:z \notin \xi(\eta)\}=\hat{g}_1(z)=\max \{ \eta : z \in \xi(\eta)\}.
 \end{align*}

The next result is the generalization of Lemma \ref{lem:index} to partial orders.
\begin{lemma}\label{lem:index2}
Let $\bar{x} \in \mathcal{X}$. We have that $X_{\mathcal{Z}}(\eta) \cap (\mathcal{Z}\setminus\bar{x}) \subseteq X_{\mathcal{Z}\setminus\bar{x}}(\eta)$, where $ X_{\bar{x}}(\eta)$ is the set of minimizing superlevel sets for the isotonic regression problem \eqref{eq:ISOREG2} on the subsample $(z,y), z \in \bar{x} \subseteq \mathcal{Z}$.
\end{lemma}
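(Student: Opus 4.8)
The plan is to transcribe the proof of Lemma~\ref{lem:index} into the language of upper sets, replacing the scalar tail function $t_\eta(x)=\sum_{i=x}^nV(\eta,P^w_{i:i})$ by the set function
\[
\phi(S):=\int_{S\times\R}V(\eta,y)\,P^w(\diff y),\qquad S\subseteq\mathcal Z,
\]
which is precisely the objective \eqref{eq:index2} evaluated on $S$. The first thing I would record is that $\phi$ is additive over disjoint unions, so that for every superlevel set $x\in\mathcal X$ one has the decomposition
\[
\phi(x)=\phi\bigl(x\cap(\mathcal Z\setminus\bar x)\bigr)+\phi\bigl(x\cap\bar x\bigr),
\]
in which the first summand is exactly the subsample objective on $\mathcal Z\setminus\bar x$, evaluated at the restricted upper set $x\cap(\mathcal Z\setminus\bar x)$. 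This is the partial-order counterpart of the split $\sum_{i=\ell}^n=\sum_{i=\ell}^m+\sum_{i=m+1}^n$ used in the totally ordered case.

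In analogy with reading $I_{1:n}(\eta)\cap\{1,\dots,m+1\}$ as the minimizing indices $\ell$ whose superlevel set $\{\ell,\dots,n\}$ still contains the removed block, I would interpret $X_{\mathcal Z}(\eta)\cap(\mathcal Z\setminus\bar x)$ as the restrictions $x\cap(\mathcal Z\setminus\bar x)$ of those minimizers $x\in X_{\mathcal Z}(\eta)$ that retain all of $\bar x$, that is, with $x\cap\bar x=\bar x$. Fixing such an $x$, I would show $x\cap(\mathcal Z\setminus\bar x)\in X_{\mathcal Z\setminus\bar x}(\eta)$ by an exchange argument: assuming some upper set $y$ of the subposet $\mathcal Z\setminus\bar x$ satisfies $\phi(y)<\phi\bigl(x\cap(\mathcal Z\setminus\bar x)\bigr)$, I would form the competitor $x':=y\cup\bar x$, verify that it is an admissible superlevel set of $\mathcal Z$, and then use additivity and the disjointness of $y$ and $\bar x$ to obtain
\[
\phi(x')=\phi(y)+\phi(\bar x)<\phi\bigl(x\cap(\mathcal Z\setminus\bar x)\bigr)+\phi(\bar x)=\phi(x),
\]
contradicting $x\in X_{\mathcal Z}(\eta)$. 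The degenerate case where the restriction is empty (the analogue of $\ell=m+1$) needs no separate treatment, since $\emptyset$ is an admissible upper set of the subposet with $\phi(\emptyset)=0$.

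The step I expect to be the main obstacle is the verification that the glued set $x'=y\cup\bar x$ is again an upper set of $\mathcal Z$. In the totally ordered case this was invisible: the removed block $\{m+1,\dots,n\}$ lies entirely above the subsample, so the tail sum is a genuine constant and any modification of the lower indices automatically stays admissible. In a partial order the \emph{constant tail} is the term $\phi(x\cap\bar x)=\phi(\bar x)$, and keeping it fixed while perturbing only inside $\mathcal Z\setminus\bar x$ is legitimate precisely when the perturbation does not drag into $x'$ any point of $\bar x$ lying above $y$; this is where $x\cap\bar x=\bar x$ is used, for then every such point already belongs to $\bar x\subseteq x'$. Concretely I would check upper-closedness of $y\cup\bar x$ by cases: a point above an element of $y$ either stays in $\mathcal Z\setminus\bar x$, where it lies in $y$ by upper-closedness of $y$ in the subposet, or it falls into $\bar x$; and a point above an element of $\bar x$ stays in $\bar x$ because $\bar x\in\mathcal X$. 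Once admissibility of $x'$ is secured, the remainder is the same additivity-and-minimality bookkeeping as in Lemma~\ref{lem:index}, and the inclusion $X_{\mathcal Z}(\eta)\cap(\mathcal Z\setminus\bar x)\subseteq X_{\mathcal Z\setminus\bar x}(\eta)$ follows.
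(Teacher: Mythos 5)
Your proof is correct, and at its core it uses the same mechanism as the paper: the additive decomposition $\phi(x)=\phi\bigl(x\cap(\mathcal{Z}\setminus\bar{x})\bigr)+\phi(x\cap\bar{x})$ of the objective \eqref{eq:index2}. The difference is in what gets verified. The paper's proof takes $x'\in X_{\mathcal{Z}}(\eta)\cap(\mathcal{Z}\setminus\bar{x})$, writes down this decomposition, and simply asserts that the restricted objective has a minimum at $x'$, with the case $x=\bar{x}$ (yielding $\emptyset$) treated separately; it never pins down the meaning of the set-theoretic intersection, and under the literal reading ``global minimizers contained in $\mathcal{Z}\setminus\bar{x}$'' the statement is in fact false: on the chain $a\prec b$ with $\bar{x}=\{b\}$, $\int_{\{a\}\times\R}V(\eta,y)\,P^w(\diff y)=-1$ and $\int_{\{b\}\times\R}V(\eta,y)\,P^w(\diff y)=2$, the unique global minimizer is $\emptyset\subseteq\mathcal{Z}\setminus\bar{x}$, whereas the subproblem minimizer on $\{a\}$ is $\{a\}$. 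Your reading --- restrictions $x\cap(\mathcal{Z}\setminus\bar{x})$ of those minimizers with $\bar{x}\subseteq x$ --- is the one that reproduces Lemma \ref{lem:index} (there $\ell\leq m+1$ encodes exactly $\{\ell,\dots,n\}\supseteq\{m+1,\dots,n\}$) and makes the claim true, and your check that the glued competitor $y\cup\bar{x}$ is an upper set of $\mathcal{Z}$ is precisely the step that is invisible for total orders but carries the weight of the argument in the partial-order setting; the paper's ``hence'' relies on it silently. Your uniform exchange argument also subsumes the paper's separate boundary case $\emptyset$. One small correction of emphasis: the hypothesis $x\cap\bar{x}=\bar{x}$ is not what makes $y\cup\bar{x}$ upper-closed (that uses only $\bar{x}\in\mathcal{X}$ and upper-closedness of $y$ in the subposet); it is what yields the cancellation $\phi(x)=\phi\bigl(x\cap(\mathcal{Z}\setminus\bar{x})\bigr)+\phi(\bar{x})$ against $\phi(y\cup\bar{x})=\phi(y)+\phi(\bar{x})$ --- which is exactly how your displayed inequality uses it, so the proof stands as written.
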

\begin{proof}
Let $x' \in X_{\mathcal{Z}}(\eta) \cap (\mathcal{Z}\setminus\bar{x})$ for some $\eta \in \R$. Therefore, the function 
\begin{align*}
	t_\eta : \mathcal{Z}\to\R,
	x \mapsto\int_{x \times \R} V(\eta,y) \, P^w(\diff y)
\end{align*}
has a minimum at $x'$. We can write
\begin{align*}
	\int_{x \times \R} V(\eta,y) \, P^w(\diff y)
	= \int_{x \cap (\mathcal{Z}\setminus\bar{x}) \times \R} V(\eta,y) \, P^w(\diff y)
	+ \int_{x \cap \bar{x} \times \R} V(\eta,y) \, P^w(\diff y).
\end{align*}
Hence, $\restr{t_\eta}{\mathcal{Z}\setminus\bar{x}}$ has a minimum at $x'$ and thus $x' \subseteq X_{\mathcal{Z}\setminus\bar{x}}(\eta)$. If $t_\eta$ has a minimum in $x=\bar{x}$, then
\begin{align*}
	t_\eta(x) - \int_{x \cap \bar{x} \times \R} V(\eta,y) \, P^w(\diff y) \geq 0,
\end{align*}
with equality in $x=\bar{x}$. Thus, $\emptyset \in X_{\mathcal{Z}\setminus\bar{x}}(\eta)$.
\end{proof}

Let us generalize Proposition \ref{prop:main} to partial orders.
\begin{proposition}\label{prop:main2}
For fixed ${g}_2$, corresponding $\hat{g}_1^-$ and any increasing $\hat{g}_1$ we have
	\begin{align*}
	\int \tilde{L}(g_1^-(z),g_2(z),y) \, P(\diff z, \diff y)
	\leq
	\int \tilde{L}(g_1(z),g_2(z),y) \, P(\diff z, \diff y).
	\end{align*} 
\end{proposition}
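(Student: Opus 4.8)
The plan is to transcribe the proof of Proposition \ref{prop:main} into the language of upper sets, replacing the roles played there by the index sets $\{m,\dots,n\}$ and $\{1,\dots,m\}$ with an upper set $\bar{x}\in\mathcal{X}$ and its complement. First I would invoke the mixture representation of Lemma \ref{lem:mixture}, which reduces the claim to the single-parameter inequality $\int S_{\eta,2}(\hat{g}_1^-(z),g_2(z),y)\,P(\diff z,\diff y)\le\int S_{\eta,2}(\hat{g}_1(z),g_2(z),y)\,P(\diff z,\diff y)$ for every $\eta\in\R$. Writing out $S_{\eta,2}$ and discarding the terms not involving $g_1$, the only $g_1$-dependent contribution is $\int \one\{\eta\le -g_2(z)\}\,L(g_1(z),y)\,P(\diff z,\diff y)$. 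Since $g_2$ is antitonic, the set $\bar{x}_\eta:=\{z:g_2(z)\le -\eta\}$ is an upper set, and in fact lies in $\mathcal{X}$; hence it suffices to prove
\[
\int_{\bar{x}\times\R} L(\hat{g}_1^-(z),y)\,P(\diff z,\diff y)\le\int_{\bar{x}\times\R} L(\hat{g}_1(z),y)\,P(\diff z,\diff y)
\]
for every upper set $\bar{x}\in\mathcal{X}$, which is the exact analogue of \eqref{eq:wts1_orig}.

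Second, as in the total-order proof I would restrict attention to $\hat{g}_1$ whose jumps occur only at minimizing superlevel sets, so that the two fits carry the same total loss $\int_{\mathcal{Z}} L(\hat{g}_1^-)\,P=\int_{\mathcal{Z}} L(\hat{g}_1)\,P$. The displayed inequality on $\bar{x}$ is then equivalent to its converse on the complementary lower set $\bar{y}:=\mathcal{Z}\setminus\bar{x}$, namely $\int_{\bar{y}} L(\hat{g}_1)\,P\le\int_{\bar{y}} L(\hat{g}_1^-)\,P$, the analogue of \eqref{eq:wts1}.

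Third, I would prove this converse by the same two-case dissection. When $\bar{x}$ is aligned with the partition of $\hat{g}_1^-$ (so that $\restr{\hat{g}_1^-}{\bar{y}}$ already equals the subsample solution $\hat{g}_{1;\bar{y}}^-$), Lemma \ref{lem:order2} together with Proposition \ref{prop:fixed2} squeezes $\hat{g}_{1;\bar{y}}^-\le\restr{\hat{g}_1}{\bar{y}}\le\restr{\hat{g}_1^+}{\bar{y}}\le\hat{g}_{1;\bar{y}}^+$, and Lemma \ref{lem:index2} (with the superlevel-set correspondence, supplying the poset version of Corollary \ref{cor:opt}) shows $\restr{\hat{g}_1}{\bar{y}}$ is itself optimal on $\bar{y}$, forcing equality. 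In the misaligned case I would, on each level set $Q$ of $\hat{g}_{1;\bar{y}}^-$ where $\hat{g}_1^-$ takes a constant value $c^-$ and $\hat{g}_1$ a constant value $c$ with $c^-\le c<\hat{g}_{1;\bar{y}}^-$, invoke order-sensitivity of $L$ to conclude that the loss is monotone along $c^-\le c\le\hat{g}_{1;\bar{y}}^-$, so $\hat{g}_1$ incurs no more loss than $\hat{g}_1^-$ on $Q$; on the remaining sublattice, where $\hat{g}_1$ is already an optimal fit, its loss is dominated by that of $\hat{g}_1^-$ directly. Summing the two contributions yields the converse.

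The main obstacle is the combinatorial bookkeeping of the misaligned case. In the total order the argument pivots on the smallest index $j_0$ at which the two fits separate and on the interval $\{j,\dots,j_0\}$ where $\hat{g}_1$ is constant; neither the minimal element nor the contiguous interval has a literal counterpart in a general poset. I would therefore recast this step entirely in terms of admissible superlevel sets: instead of a first index of disagreement I would work with the largest upper set on which $\restr{\hat{g}_1}{\bar{y}}$ remains optimal, and verify, using Lemma \ref{lem:index2} and the monotone correspondence $\xi(\eta)$, that its complement within $\bar{y}$ decomposes into level sets of $\hat{g}_{1;\bar{y}}^-$ carrying the ordering $c^-\le c<\hat{g}_{1;\bar{y}}^-$ required for order-sensitivity. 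Arranging this decomposition to respect the partial order, so that the squeeze and the order-sensitivity estimate apply on each piece simultaneously, is where the care lies; once it is in place, the remaining summation is identical to the total-order case.
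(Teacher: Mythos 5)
Your proposal follows essentially the same route as the paper's own proof: the reduction via Lemma \ref{lem:mixture} to the elementary losses $S_{\eta,2}$, the passage (using antitonicity of $g_2$) to an inequality over upper sets and then to its converse on complements $\mathcal{Z}\setminus\bar{x}$ after restricting to fits that jump only at minimizing superlevel sets, the case split with the squeeze $\hat{g}_{1;\mathcal{Z}\setminus\bar{x}}^-\leq\restr{\hat{g}_1}{\mathcal{Z}\setminus\bar{x}}\leq\hat{g}_{1;\mathcal{Z}\setminus\bar{x}}^+$ from Lemma \ref{lem:order2} and optimality via Lemma \ref{lem:index2}, and order sensitivity of $L$ on the region where $\hat{g}_{1;\mathcal{Z}\setminus\bar{x}}^->\hat{g}_1$. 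The combinatorial bookkeeping you flag as the main remaining obstacle is exactly the step the paper itself handles only by a pointwise split and the phrase ``argue similarly as in the proof of Proposition \ref{prop:main}'', so your treatment is at the same level of rigor as, and structurally identical to, the published argument.
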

\begin{proof} Let $\mathcal{Q}$ and $\mathcal{Q}^-$ denote the partition of $\mathcal{Z}$ corresponding to $\hat{g}_1$ and $\hat{g}_1^-$, respectively.
By Lemma \ref{lem:mixture}, it suffices to show that for all $\eta \in \R$
\begin{align*}
		\int S_{\eta,2}(\hat{g}_1^-(z),g_2(z),y) \, P(\diff z, \diff y)
	\leq 
		\int S_{\eta,2}(\hat{g}_1(z),g_2(z),y)\, P(\diff z, \diff y).
\end{align*}
For the latter, it suffices to show that for all $\bar{x} \in \mathcal{X}$
\begin{align*}
	\int_{\bar{x} \times \R}
	L(\hat{g}_1^-(z), y) \, P(\diff z, \diff y)
	\leq
	\int_{\bar{x} \times \R}
	L(\hat{g}_1(z), y) \, P(\diff z, \diff y).
\end{align*}
Again it suffices to consider $\hat{g}_1$ with superlevel stets in $\cup_\eta X(\eta)$ and again  we will prove the converse. In other words, for all $\bar{x} \in \mathcal{X}$ we have
\begin{align}\label{eq:wts2}
	\int_{\mathcal{Z}\setminus \bar{x} \times \R}
	L(\hat{g}_1(z), y) \, P(\diff z, \diff y)
	\leq
	\int_{\mathcal{Z}\setminus \bar{x} \times \R}
	L(\hat{g}_1^-(z), y) \, P(\diff z, \diff y)
\end{align}

If $\mathcal{Z}\setminus \bar{x} = Q_1 \cup \dots \cup Q_{i}$, $Q_1, \dots, Q_{i} \in \mathcal{Q}$, Lemma \ref{lem:index2} implies that $\restr{\hat{g}_1}{\mathcal{Z}\setminus \bar{x}}$ is optimal on $(z,y), z \in \mathcal{Z}\setminus \bar{x}$. Thus, \eqref{eq:wts2} holds trivially.
If there exists no sequence of partition elements such that $\mathcal{Z}\setminus \bar{x} = Q_1 \cup \dots \cup Q_{i}$ we distinguish two cases.

\noindent
\textbf{Case 1:} If $\mathcal{Z}\setminus \bar{x} = Q_1^- \cup \dots \cup Q_{i^-}^-$, $Q_1^- , \dots , Q_{i^-}^- \in \mathcal{Q}^-$ Lemma \ref{lem:order2} implies that
\begin{align*}
	\restr{\hat{g}_1^-}{\mathcal{Z}\setminus \bar{x}}
	=\hat{g}_{1;\mathcal{Z}\setminus \bar{x}}^-
	\leq
	\restr{\hat{g}_1}{\mathcal{Z}\setminus \bar{x}}
	\leq
	\restr{\hat{g}_1^+}{\mathcal{Z}\setminus \bar{x}}
	\leq \hat{g}_{1;\mathcal{Z}\setminus \bar{x}}^+
\end{align*}
Moreover, by Lemma \ref{lem:index2}, $X_{\mathcal{Z}}(\eta) \cap (\mathcal{Z}\setminus\bar{x}) \subseteq X_{\mathcal{Z}\setminus\bar{x}}(\eta)$. Hence $\restr{\xi}{\mathcal{Z}\setminus\bar{x}}(\eta) \in X_{\mathcal{Z}\setminus\bar{x}}(\eta)$ for all $\eta \in \R$, where $\xi:\R \to \mathcal{Z}$ is the function imposing the score-minimizing superlevel sets corresponding to $\hat{g}_1$. Thus, by Proposition 4.5 in \cite{Jordan2019} $\restr{\hat{g}_1}{\mathcal{Z}\setminus \bar{x}}$ is an optimal solution to the isotonic regression problem on $(z,y), z \in \mathcal{Z}\setminus \bar{x}$.

\noindent
\textbf{Case 2:} It remains to consider the case where no sequence of partition elements such that $\mathcal{Z}\setminus \bar{x} = Q_1^- \cup \dots \cup Q_{i_-}^-$ exists.
Note that $\hat{g}_1$ is optimal for all $z\in \mathcal{Z}\setminus\bar{x}$ with $\hat{g}_{1;\mathcal{Z}\setminus\bar{x}}^-(z)\leq\hat{g}_1(z)$. Indeed, for those $z$, we have $\bar{g}_{1;\mathcal{Z}\setminus\bar{x}}^-(z) \leq \hat{g}_1(z)\leq \hat{g}_{1;\mathcal{Z}\setminus\bar{x}}^+(z)$, and can argue as in case 1.
For $z\in \mathcal{Z}\setminus\bar{x}$ with $\hat{g}_{1;\mathcal{Z}\setminus\bar{x}}^-(z)>\hat{g}_1(z)$, we can argue similarly as in the proof of Proposition \ref{prop:main}. 
For every $z \in \{z' \in \mathcal{Z}\setminus\bar{x} : \hat{g}_{1;\mathcal{Z}\setminus\bar{x}}^-(z')>\hat{g}_1(z')\}$ we have $z \in Q_{i+r}$, $r \in \{1,\dots,k\}$. Moreover, $\hat{g}_1$ is constant on every each $Q_{i+r}$, $r \in \{1,\dots,k\}$.
With the same reasoning as in the proof of Proposition \ref{prop:main}, we obtain that 
\begin{align*}
	\int_{Q_{i+r}^> \times \R}
	L(\hat{g}_{1;\mathcal{Z}\setminus\bar{x}}^-(z), y) \, 
	P(\diff z, \diff y)
	&\leq
	\int_{Q_{i+r}^> \times \R}
	L(c_i, y) \, 
	P(\diff z, \diff y) \\&
	\leq
	\int_{Q_{i+r}^> \times \R}
	L(c_i^-, y) \, 
	P(\diff z, \diff y)
\end{align*}
for all $r \in \{1,\dots,k\}$, where $Q_{i+r}^>:= Q_{i+r} \cap \{z \in \mathcal{Z}\setminus\bar{x} : \hat{g}_{1;\mathcal{Z}\setminus\bar{x}}^-(z)>\hat{g}_1(z)\}$. This implies the statement.
\end{proof}

Proposition \ref{prop:minimizers} also translates directly to partial orders.
\begin{proposition}\label{prop:minimizers2}
Assume that there exist $\hat{g}_1, \hat{g}_2 \colon \mathcal{Z} \to \R$ minimizing \eqref{eq:ISOREG2}, then $\hat{g}_1^-(\cdot; \hat{g}_2)$, and the corresponding $\hat{g}_2^-(\cdot; \hat{g}_1^-(\cdot; \hat{g}_2))$ are also minimizers.
\end{proposition}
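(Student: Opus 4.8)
The plan is to replicate the argument of Proposition \ref{prop:minimizers}, replacing the total-order ingredients by their partial-order analogs established above: Proposition \ref{prop:fixed2} in place of Proposition \ref{prop:fixgi}, and Proposition \ref{prop:main2} in place of Proposition \ref{prop:main}. The whole argument is a two-step substitution. First one swaps the $g_1$-component for the canonical $\hat{g}_1^-$ without increasing the objective; then one observes that the original $g_2$-component was already the optimal response to this new $g_1$-component.

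First I would record that any minimizing pair $\hat{g}_1, \hat{g}_2$ must individually satisfy the componentwise optimality conditions of Proposition \ref{prop:fixed2}: with $\hat{g}_2$ fixed, $\hat{g}_1$ lies between the $T^-$ and $T^+$ bounds of part (b), and with $\hat{g}_1$ fixed, $\hat{g}_2$ equals the expression in part (a). Otherwise one could strictly decrease \eqref{eq:ISOREG2} by replacing the offending component, contradicting minimality. Next, Proposition \ref{prop:main2}, applied with the weights determined by $\hat{g}_2$, shows that the canonical $\hat{g}_1^-(\cdot; \hat{g}_2)$ attains the minimal objective value among all increasing functions for these fixed weights. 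Hence the pair $\hat{g}_1^-(\cdot; \hat{g}_2), \hat{g}_2$ is again a minimizer of \eqref{eq:ISOREG2}.

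Finally, I would apply part (a) of Proposition \ref{prop:fixed2} with $\hat{g}_1^-(\cdot; \hat{g}_2)$ held fixed: it identifies the unique optimal antitonic response, namely $\hat{g}_2^-(\cdot; \hat{g}_1^-(\cdot; \hat{g}_2))$. Since $\hat{g}_1^-(\cdot; \hat{g}_2), \hat{g}_2$ is a minimizer, $\hat{g}_2$ must coincide with this response, so the asserted pair is a minimizer.

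The main obstacle lies not in this proposition but in the machinery it invokes: the substantive work was done in Proposition \ref{prop:main2}, whose proof rests on the superlevel-set correspondence and order sensitivity. Once that is available, the present statement follows by the same short chaining of substitutions as in the total-order case; the only point requiring care is that each replacement genuinely preserves the value of \eqref{eq:ISOREG2} rather than increasing it, which is exactly what Propositions \ref{prop:main2} and \ref{prop:fixed2} provide.
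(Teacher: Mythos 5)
Your proposal is correct and follows essentially the same route as the paper: the paper's proof of Proposition \ref{prop:minimizers2} simply refers back to the argument of Proposition \ref{prop:minimizers}, which is exactly the chain you spell out (componentwise optimality of any minimizer, swapping in $\hat{g}_1^-$ via Proposition \ref{prop:main2}, then identifying $\hat{g}_2$ with the optimal antitonic response via part (a) of Proposition \ref{prop:fixed2}). Your version is in fact slightly more careful than the paper's, which contains a typo at the final step by citing Proposition \ref{prop:main} instead of the fixed-component proposition.
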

\begin{proof}
The argument is the same as in the proof of Proposition \ref{prop:minimizers}.
\end{proof}

As in the case of total orders a simultaneously optimal solution may not necessarily exists, since $\hat{g}_2^-$ imposes additional constraints. Nonetheless, we are able to formulate a criterion so that simultaneous optimality is reached whenever the criterion is fulfilled.
Let
\begin{align*}
	\hat{g}_1(z)
	&= \min_{x':z\notin x'} \max_{x \supsetneq x'}  
	T^-({P}_{x \setminus x'}) 
	= \max_{x: z \in x} \min_{x' \subsetneq x} 
	T^-(P_{x \setminus x'}),\\
	\hat{g}_2(z) 
	&= -\min_{x':z\notin x'} \max_{x \supsetneq x'} -\E(\bar{P}_{x \setminus x'})
	= -\max_{x: z \in x} \min_{x' \subsetneq x} - \E(\bar{P}_{x \setminus x'}),
\end{align*}
where $\bar{P}_{i:j}$ is the empirical distribution of $L(g_1(z),y)$, $z\in \mathcal{Z}$.
\begin{proposition}\label{prop:criterion2}
Let $\hat{g}_1^-$, $\hat{g}_2^-$ as defined above.
A simultaneously optimal solution exists if and only if $\hat{g}_{1}^-=\hat{g}_{1;\mathcal{Z}\setminus \bar{x}}^-$ for all superlevel sets $\mathcal{Z}\setminus \bar{x}$, $\bar{x} \in \mathcal{X}$ assumed by $\hat{g}_2^-$. 
\end{proposition}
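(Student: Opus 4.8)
The plan is to follow the reasoning given for the totally ordered case preceding Proposition \ref{prop:criterion}, replacing the intervals $\{m,\dots,n\}$ by the superlevel sets $\mathcal{Z}\setminus\bar{x}$ and substituting the partial-order tools (Lemmas \ref{lem:order2} and \ref{lem:index2} and Proposition \ref{prop:main2}) for their total-order analogues. First I would reduce simultaneous optimality to two separate requirements: the pair must minimize the partial-order analogue of the elementary $T$-score \eqref{eq:elem1} and the analogue of the elementary Bayes-risk score \eqref{eq:elem2} for every $\eta\in\R$. Minimizing the first for all $\eta$ is, by \cite{Jordan2019}, equivalent to $g_1$ being an optimal isotonic solution for $T$, and $\hat{g}_1^-$ is such a solution; this fixes the first component. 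It then remains to decide when a decreasing $g_2$ can be chosen so that the pair also minimizes the second score for every $\eta$.

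The key reduction is that minimizing the analogue of \eqref{eq:elem2} for all $\eta$ is equivalent to minimizing $\int \one\{\eta\le -g_2(z)\}L(g_1(z),y)\,P(\diff z,\diff y)$, i.e. to selecting, for each $\eta$, an upper set $U=\{z:-g_2(z)\ge\eta\}$ on which $\restr{g_1}{U}$ is Bayes-risk optimal. The upper sets that actually occur are precisely the superlevel sets $\mathcal{Z}\setminus\bar{x}$ assumed by the antitonic $\hat{g}_2^-$. On any such set, Lemma \ref{lem:order2} supplies the automatic inequality relating $\restr{\hat{g}_1^-}{\mathcal{Z}\setminus\bar{x}}$ to the locally computed solution $\hat{g}_{1;\mathcal{Z}\setminus\bar{x}}^-$, while Proposition \ref{prop:main2}, applied to the subsample indexed by that set, identifies $\hat{g}_{1;\mathcal{Z}\setminus\bar{x}}^-$ as a Bayes-risk minimizer there. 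Combining the two, I would show that $\restr{\hat{g}_1^-}{\mathcal{Z}\setminus\bar{x}}$ is itself Bayes-risk optimal on that set if and only if the equality $\hat{g}_1^-=\hat{g}_{1;\mathcal{Z}\setminus\bar{x}}^-$ holds, which yields the stated criterion for the candidate pair $(\hat{g}_1^-,\hat{g}_2^-)$.

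For sufficiency, if equality holds on every superlevel set assumed by $\hat{g}_2^-$, then on each such set the restriction of $\hat{g}_1^-$ attains the Bayes-risk minimum, so the values $L(\hat{g}_1^-(z),y)$ fed into the construction of $\hat{g}_2^-$ via Proposition \ref{prop:fixed2}(a) already coincide with the locally optimal ones; the upper sets selected by $\hat{g}_2^-$ are therefore those that minimize the score even when $g_1$ is free to vary, and the pair minimizes the second elementary score for all $\eta$. For necessity I would argue by contradiction using the tradeoff across $\eta$: if equality fails on some set $\mathcal{Z}\setminus\bar{x}$, then by order-sensitivity the Bayes risk of $\hat{g}_1^-$ on that set is strictly larger than that of $\hat{g}_{1;\mathcal{Z}\setminus\bar{x}}^-$, so for the $\eta$-range in which that upper set is selected the second score can be strictly lowered by a locally improved fit. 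However, any globally $T$-optimal $g_1$ satisfies $g_1\ge\hat{g}_1^-$ (as $\hat{g}_1^-$ is the minimal $T$-optimal solution), while on all of $\mathcal{Z}$ the function $\hat{g}_1^-=\hat{g}_{1;\mathcal{Z}}^-$ is already Bayes-risk optimal (Proposition \ref{prop:main2}); hence any local improvement on $\mathcal{Z}\setminus\bar{x}$ must degrade the score at small $\eta$, so no single $T$-optimal $g_1$ can be optimal for all $\eta$ and no simultaneously optimal solution exists.

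The main obstacle I anticipate is the necessity direction: one must rule out \emph{every} simultaneously optimal pair, not merely show that $(\hat{g}_1^-,\hat{g}_2^-)$ fails. This requires carefully combining the global constraint that $g_1$ lie between $\hat{g}_1^-$ and $\hat{g}_1^+$ (through the monotonicity in Lemma \ref{lem:order2}) with the incompatibility, when the criterion fails, between the upper set forced by the Bayes-risk optimum on $\mathcal{Z}\setminus\bar{x}$ and the pooling imposed at small $\eta$ on all of $\mathcal{Z}$. Making the tradeoff strict—showing the gain on one $\eta$-range is necessarily outweighed elsewhere—and handling the non-uniqueness of Bayes-risk minimizers when $T$ is interval-valued are the delicate points.
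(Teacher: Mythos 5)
Your proposal is correct and takes essentially the same approach as the paper: the paper's entire proof of Proposition \ref{prop:criterion2} is a one-sentence appeal to the reasoning behind Proposition \ref{prop:criterion}, namely the reduction to the two families of elementary scores, the identification of the sets $\{z : -g_2(z) \ge \eta\}$ with the sets on which $\hat{g}_1^-$ must remain locally optimal, and the trade-off argument that any local improvement inevitably degrades the score for small $\eta$ --- precisely the steps you reconstruct using the partial-order tools (Lemmas \ref{lem:order2} and \ref{lem:index2}, Proposition \ref{prop:main2}). If anything, your write-up is more explicit than the paper's own treatment, which never formalizes the necessity direction (ruling out \emph{every} candidate pair) that you correctly flag as the delicate point.
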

The reasoning behind this Proposition is analogous to the reasoning behind Proposition \ref{prop:criterion}.

\end{document}